\tikzset{individu/.style={draw,thick}}
\numberwithin{equation}{section}
\theoremstyle{plain}
\newtheorem{theorem}{Theorem}[section]
\newtheorem{corollary}[theorem]{Corollary}
\newtheorem{conjecture}[theorem]{Conjecture}
\newtheorem{lemma}[theorem]{Lemma}
\theoremstyle{definition}
\newtheorem{definition}[theorem]{Definition}
\theoremstyle{remark}
\newtheorem{remark}[theorem]{Remark}
\newcommand{\Z}{\mathbb{Z}}
\newcommand{\calR}{\mathcal{R}}
\newcommand{\calP}{\mathcal{P}}
\newcommand{\calQ}{\mathcal{Q}}
\newcommand{\calS}{\mathcal{S}}
\newcommand{\calC}{\mathcal{C}}
\newcommand{\calZ}{\mathcal{Z}}
\renewcommand{\bar}[1]{\overline{#1}}
\renewcommand{\tilde}[1]{\widetilde{#1}}
\renewcommand{\epsilon}{\varepsilon}
\renewcommand{\phi}{\varphi}
\newcommand{\Addresses}{{
  \bigskip
  \footnotesize

  \textsc{Unit\'e de Math\'ematiques Pures et Appliqu\'ees, \'Ecole normale sup\'erieure de Lyon, 46 all\'ee d'Italie, 69364 Lyon Cedex 07, France}\par\nopagebreak
  \textit{E-mail address}: \texttt{sanjay.ramassamy@ens-lyon.fr}

}}
\title{Extensions of partial cyclic orders, Euler numbers and multidimensional boustrophedons}
\author{Sanjay Ramassamy}
\date{\today}
\begin{document}

\maketitle

\begin{abstract}
We enumerate total cyclic orders on $\left\{1,\ldots,n\right\}$ where we prescribe the relative cyclic order of consecutive triples $(i,{i+1},{i+2})$, these integers being taken modulo $n$. In some cases, the problem reduces to the enumeration of descent classes of permutations, which is done via the boustrophedon construction. In other cases, we solve the question by introducing multidimensional versions of the boustrophedon. In particular we find new interpretations for the Euler up/down numbers and the Entringer numbers.
\end{abstract}

\section{Introduction}
\label{sec:introduction}

In this paper we enumerate some extensions of partial cyclic orders to total cyclic orders. In certain cases, this question is related to that of linear extensions of some posets.

A (linear) order on a set $X$ is a reflexive, antisymmetric and transitive binary relation on this set. When the set $X$ possesses a partial order, a classical problem is the search for (and the enumeration of) linear extensions, i.e. total orders on $X$ that are compatible with this partial order. Szpilrajn~\cite{S30} proved that such a linear extension always exists using the axiom of choice. It is possible to find a linear extension of a given finite poset in linear time (cf~\cite{CLR01} section 22.4). Brightwell and Winkler~\cite{BW91} proved that counting the number of linear extensions of a poset was $\#P$-complete.

Another type of order one can put on a set $X$ is a cyclic order, which is a ternary relation $Z \subset X^3$ satisfying the following conditions:
\begin{enumerate}
  \item $\forall x, y, z \in X$, $(x,y,z) \in Z \Rightarrow (y,z,x) \in Z$ (cyclicity).
  \item $\forall x ,y,z \in X$, $(x,y,z) \in Z \Rightarrow (z,y,x) \not \in Z$ (asymmetry).
  \item $\forall x,y,z,u \in X$, $(x,y,z) \in Z$ and $(x,z,u) \in Z \Rightarrow (x,y,u) \in Z$ (transitivity).
\end{enumerate}
A cyclic order may be partial or total (in the latter case, for any triple $(x,y,z)$ of distinct elements, either $(x,y,z)\in Z$ or $(z,y,x)\in Z$). The problem of studying the total cyclic orders extending a given partial cyclic order is much harder than its linear counterpart and has been subject to less investigations. Not every partial cyclic order admits an extension to a total cyclic order, as shown by Megiddo~\cite{M76}. Galil and Megiddo~\cite{GM77} proved that the problem of determining whether a given partial cyclic order admits an extension is NP-complete.

For any $n\geq1$, denote by $[n]$ the set $\left\{1,\ldots,n\right\}$. In this paper, we solve the question of the enumeration of total cyclic orders on $[n]$ which extend a partial cyclic order prescribing the relative order of any triple $(i,{i+1},{i+2})$. This is the cyclic counterpart of the classical linear extension problem considered in Subsection~\ref{subsec:descentclass}, where the relative order of every pair $(i,{i+1})$ is prescribed. We enumerate three types of total cyclic orders.

\begin{definition}
Fix $n\geq3$, $w=\epsilon_1\cdots\epsilon_{n-2}\in\left\{+,-\right\}^{n-2}$ and $\eta\in\left\{+,-\right\}$.
\begin{itemize}
 \item The set $\calP_w$ is the set of all total cyclic orders $Z$ on $[n]$ such that for any $1 \leq i \leq n-2$ verifying $\epsilon_i=+$ (resp. for any $1 \leq i \leq n-2$ verifying $\epsilon_i=-$), we have $(i,{i+1},{i+2})\in Z$ (resp. $({i+2},{i+1},i)\in Z$).
 \item The set $\calQ_w^+$ (resp. $\calQ_w^-$) is the set of all total cyclic orders $Z\in\calP_w$ such that $({n-1},n,1)\in Z$ (resp. $(1,n,{n-1})\in Z$).
 \item The set $\calR_w^{\eta,+}$ (resp. $\calR_w^{\eta,-}$) is the set of all total cyclic orders $Z\in\calQ_w^{\eta}$ such that $(n,1,2)\in Z$ (resp. $(2,1,n)\in Z$).
\end{itemize}
\end{definition}
See Figure~\ref{fig:cyclicorder4} for an example.

\begin{figure}[htpb]
\centering
\includegraphics[height=1.3in]{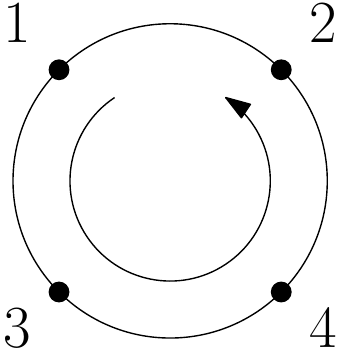}
\caption{A total cyclic order $Z$ on $[n]$ may be represented pictorially by placing all the elements of $[n]$ on a circle, in such a way that $(i,j,k)\in Z$ if and only if starting at $i$ and turning in the positive direction, one sees $j$ before $k$. Here $n=4$ and $Z$ is the set formed by the union of the four triples $(1,3,4),(3,4,2),(4,2,1)$ and $(2,1,4)$ with the eight possible cyclic permutations of these four triples. The arrow indicates the positive direction of rotation. The cyclic order $Z$ belongs to the following sets: $\calP_{-+}$, $\calQ_{-+}^+$ and $\calR_{-+}^{+-}$.}
\label{fig:cyclicorder4}
\end{figure}

Our main results concern the enumeration of total cyclic orders of each of the three above types. It is not hard to see that each such enumeration question is equivalent to the enumeration of total cyclic orders extending some given partial cyclic order on $[n]$. We show that these enumeration questions can always be solved by using linear recurrences which extend the classical boustrophedon construction used to compute the Entringer numbers. As a consequence, this provides an algorithm for computing the cardinalities of the sets $\calP_w$, $\calQ_w^{\eta}$ and $\calR_w^{\eta_1,\eta_2}$ which runs in polynomial time in the length of $w$, instead of the naive super-exponential algorithm which would consist in testing every total cyclic order to check if it belongs to one of these sets.

\subsection*{Outline of the paper}

In Section~\ref{sec:firstenumeration}, we state that the enumeration of each $\calP_w$ is equivalent to the enumeration of some descent class of permutations. As a consequence, we obtain new interpretations for the Euler and Entringer numbers in terms of cyclic orders. We prove these statements in Section~\ref{sec:proof1} by producing a specific bijection between total cyclic orders on $[n+1]$ and permutations of $[n]$. In Section~\ref{sec:secondenumeration}, we briefly recall the classical boustrophedon construction and we explain how to extend it to higher dimensions to enumerate the classes $\calQ_w^\eta$ and $\calR_w^{\eta_1,\eta_2}$. The proof of these linear recurrence relations can be found in Section~\ref{sec:proof2}. We finish in Section~\ref{sec:conjecture} by formulating a conjecture regarding the asymptotic densities of the classes $\calQ_w^\eta$ and $\calR_w^{\eta_1,\eta_2}$ inside the class $\calP_w$, when all the letters of $w$ are $+$, in the limit when the length of $w$ goes to infinity.

\section{Enumeration of \texorpdfstring{$\calP_w$}{Pw} and relation with the Euler and Entringer numbers}
\label{sec:firstenumeration}

The enumeration of the total cyclic orders in $\calP_w$ will be shown to be equivalent to the enumeration of descent classes of permutations, which we now introduce.

\subsection{Descent classes of permutations}
\label{subsec:descentclass}

For any $n\geq1$, denote by $\calS_n$ the set of permutations of $[n]$.
\begin{definition}
For any $n\geq2$, the \emph{descent pattern} of a permutation $\sigma\in\calS_n$ is the word $w=\epsilon_1\ldots\epsilon_{n-1}\in\left\{+,-\right\}^{n-1}$ such that for all $1\leq i\leq n-1$
\[
\epsilon_i=\begin{cases}
+ &\text{ if } \sigma(i+1)>\sigma(i), \\
- &\text{ if } \sigma(i+1)<\sigma(i).
\end{cases}
\]
The \emph{descent class} $\calS_w$ is defined to be the set of all permutations with descent pattern $w$.
\end{definition}
For example, the descent pattern of the permutation $\sigma$ whose one-line notation\footnote{Recall that the one-line notation for $\sigma\in\calS_n$ is $\sigma(1) \sigma(2) \cdots \sigma(n)$.} is $1 5 3 2 4$ is $+--+$.

To any word $w=\epsilon_1\ldots\epsilon_{n-1}\in\left\{+,-\right\}^{n-1}$, we associate the partial order $\prec_w$ on $[n]$ generated by the following relations: for any $1\leq i\leq n-1$, $i \prec_w {i+1}$ (resp. ${i+1} \prec_w i$) if $\epsilon_i=+$ (resp. $\epsilon_i=-$). Then the number of linear extensions of $\prec_w$ is $\#\calS_w$, hence the enumeration of descent classes is also a problem of enumeration of linear extensions of a poset.

The first formula for $\#\calS_w$ was obtained by MacMahon~\cite{M01}. For further formulas for $\#\calS_w$, see~\cite{V79} and the references therein.

A special descent class is the class of \emph{up/down permutations}, this is the case when the letters of $w$ in odd (resp. even) positions are all $+$ (resp. $-$). Andr\'e~\cite{A79} showed that if we write $F(x)=\sec x + \tan x$, then the number of up/down permutations in $\calS_n$ is the $n$-th Euler number $E_n:=F^{(n)}(0)$. One way to compute the Euler numbers is via the Entringer numbers $e_{n,i}$, which count the number of up/down permutations $\sigma\in\calS_n$ such that $\sigma(n)=i$. These Entringer numbers satisfy linear recurrence relations corresponding to the boustrophedon construction~\cite{E66} (see Subsection~\ref{subsec:boustrophedon} for more details).

\subsection{Connection with the enumeration of total cyclic orders}

For any $n\geq 1$ and $w\in\left\{+,-\right\}^n$, one can express the number of total cyclic orders in $\calP_w$ in terms of cardinalities of descent classes of permutations.
Define the involution $i$ on $\bigsqcup_{n\geq1} \left\{+,-\right\}^n$ which flips all the signs at even locations: $i(\epsilon_1\cdots\epsilon_n):=\epsilon'_1\cdots\epsilon'_n$ with
\[
\epsilon'_j=\begin{cases}
\epsilon_j &\text{ if } j \text{ is odd}, \\
-\epsilon_j &\text{ if } j \text{ is even}.
\end{cases}
\]
For example, $i(++--)=+--+$. Then the following holds:

\begin{theorem}
\label{thm:cyclicshape}
For integer $n\geq1$ and any word $w\in\left\{+,-\right\}^n$,
\begin{equation}
\#\calP_w=\#\calS_{i(w)}.
\end{equation}
\end{theorem}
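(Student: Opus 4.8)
The plan is to construct an explicit bijection between the total cyclic orders in $\calP_w$ on $[n+1]$ and the permutations in $\calS_{i(w)}$ on $[n]$. The natural idea is to ``cut'' the circle carrying the total cyclic order at a distinguished point and read off a linear arrangement. Concretely, given a total cyclic order $Z$ on $[n+1]$, I would single out one element---say $n+1$---and record the linear order in which the remaining elements $1,\ldots,n$ are encountered when traversing the circle in the positive direction starting just after $n+1$. This produces a word, and the associated permutation $\sigma\in\calS_n$ is the one sending each position to the rank of the corresponding element. I would need to verify that this map is a genuine bijection onto $\calS_n$: injectivity and surjectivity both follow from the fact that a total cyclic order on a finite set is exactly the data of a cyclic arrangement of its elements on a circle, and fixing the position of $n+1$ rigidifies this into an honest linear arrangement of the other $n$ elements.

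The heart of the argument is to track how the defining constraints of $\calP_w$ translate under this cut. For each $1\le i\le n-1$, membership in $\calP_w$ prescribes the relative cyclic order of the consecutive triple $(i,i+1,i+2)$ via the sign $\epsilon_i$. First I would compute, for a generic $i$ in the interior, how the condition $(i,i+1,i+2)\in Z$ (or its reverse) constrains the relative positions of $i,i+1,i+2$ once the circle has been cut at $n+1$. The key combinatorial fact to establish is that the cyclic betweenness condition on the triple, when read linearly, is equivalent to a descent/ascent condition on the permutation $\sigma$---but with a parity-dependent twist. Because cutting the circle linearizes a ternary relation into comparisons between consecutive values, an ascent in the cyclic picture at an even-indexed position becomes a descent in the linear picture and vice versa. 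This is precisely the role of the sign-flipping involution $i$ at even locations, and I expect it to emerge automatically from carefully writing out the triple condition in terms of the two pairwise comparisons $\sigma^{-1}(i)$ versus $\sigma^{-1}(i+1)$ and $\sigma^{-1}(i+1)$ versus $\sigma^{-1}(i+2)$.

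I anticipate the main obstacle to be twofold. First, one must pin down the correct bookkeeping for the parity alternation: the triple condition involves two overlapping consecutive pairs, and the translation of a single cyclic triple constraint into statements about the permutation's descent pattern must be done with enough care that the alternating sign flip is forced rather than assumed. Establishing the base of the induction on positions and confirming that the overlaps are consistent is where sign errors most easily creep in. Second, one must handle the boundary triples involving the element $n+1$ and the wrap-around near the cut point, checking that no spurious constraint is introduced or lost there; since $w$ has length $n$ but only $n-1$ interior descent positions arise on $[n]$, the indexing at the ends requires attention.

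Once the correspondence between each sign $\epsilon_i$ of $w$ and the corresponding sign of $i(w)$ governing the descent pattern of $\sigma$ is verified, the theorem follows: the bijection restricts to a bijection between $\calP_w$ and $\calS_{i(w)}$, yielding $\#\calP_w=\#\calS_{i(w)}$.
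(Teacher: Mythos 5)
Your construction breaks down at what you yourself identify as its heart: the claim that, after cutting the circle at the largest element, the cyclic condition on a triple $(i,i+1,i+2)$ becomes an ascent/descent condition on the resulting permutation. It does not. If $p_i$ denotes the position of $i$ in the linear arrangement read off after the cut, then $(i,i+1,i+2)\in Z$ is equivalent to $(p_i,p_{i+1},p_{i+2})$ being a cyclic rotation of an increasing triple, i.e.\ to
\[
p_i<p_{i+1}<p_{i+2} \quad\text{or}\quad p_{i+1}<p_{i+2}<p_i \quad\text{or}\quad p_{i+2}<p_i<p_{i+1},
\]
and these three cases do not share a common sign pattern for the two comparisons $p_i$ vs.\ $p_{i+1}$ and $p_{i+1}$ vs.\ $p_{i+2}$ (the first case is ascent--ascent, the third is ascent--descent). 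So no bookkeeping of parities can turn a single cyclic triple constraint into a single descent constraint. Concretely, take $w=++$, so $\calP_{++}$ consists of the total cyclic orders on $[4]$ with $(1,2,3)\in Z$ and $(2,3,4)\in Z$; there are exactly two, namely the circular arrangements $(1,2,3,4)$ and $(1,4,2,3)$ read in the positive direction. Cutting at $4$ and reading the remaining elements starting just after $4$ yields the words $123$ and $231$, with descent patterns $++$ and $+-$: the image of $\calP_{++}$ is not contained in any single descent class, in particular not in $\calS_{i(++)}=\calS_{+-}$. (Recording positions instead of values, i.e.\ taking inverses, gives $123$ and $312$, patterns $++$ and $-+$ --- same failure.)

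This is precisely why the paper does not cut the circle. Its bijection $F$ is built by induction: the elements $3,4,\ldots,n+2$ are inserted into the cycle one at a time, and when $j+1$ is inserted the new dot of the permutation diagram is placed in a region determined by the arc content $c_Z(j,j+1)$ (the number of elements strictly between $j$ and $j+1$), with the regions numbered bottom-to-top or top-to-bottom according to the parity of $j$. The parity flip in $i(w)$ comes from this alternating (boustrophedon) numbering of regions, not from linearizing a ternary relation into two pairwise comparisons. If you want to repair your argument you would need to replace the cut map by such an insertion scheme, in which the datum extracted at each step is an arc content rather than a pair of comparisons; as written, the proposed map is a bijection $\calZ_{n+2}\to\calS_{n+1}$ that simply does not respect the classes in question.
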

The proof of Theorem~\ref{thm:cyclicshape} can be found in Section~\ref{sec:proof1}. For any permutation $\sigma\in\calS_{i(w)}$, the word $w$ is sometimes called the alternating descent pattern of $\sigma$, see for example~\cite{C08}.

\begin{remark}
We prove Theorem~\ref{thm:cyclicshape} by constructing a somewhat natural bijection between each $\calP_w$ and $\calS_{i(w)}$. It would have been easier to just show that the numbers $\#\calP_w$ verify the same linear recurrence relations as the ones verified by the numbers $\#\calS_{i(w)}$ (cf Subsection~\ref{subsec:boustrophedon}), but exhibiting a bijection between these sets provides a stronger connection between the classes $\calP_w$ and descent classes of permutations.
\end{remark}

As a corollary, taking $w$ to be a word with all the letters that are $+$, we deduce a new interpretation for the Euler numbers:

\begin{corollary}
\label{cor:Eulernumbers}
For any $n\geq1$, the Euler number $E_n$ is the number of total cyclic orders $Z$ on $[n+1]$ which verify $(i,{i+1},{i+2})\in Z$ for any $1\leq i \leq n-1$.
\end{corollary}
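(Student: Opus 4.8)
The plan is to recognize the set of total cyclic orders described in the statement as one of the classes $\calP_w$, and then to invoke Theorem~\ref{thm:cyclicshape} together with André's theorem. First I would observe that, by the definition of $\calP_w$, the total cyclic orders $Z$ on $[n+1]$ satisfying $(i,{i+1},{i+2})\in Z$ for every $1\leq i\leq n-1$ are precisely the elements of $\calP_w$ for $w$ the all-plus word of length $n-1$, which I write $w=+\cdots+\in\{+,-\}^{n-1}$. Indeed, a word of length $m$ encodes cyclic orders on the set $[m+2]$, so taking $m=n-1$ gives cyclic orders on $[n+1]$, and the letter $\epsilon_i=+$ in position $i$ encodes exactly the constraint $(i,{i+1},{i+2})\in Z$.

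Next I would apply Theorem~\ref{thm:cyclicshape} to the word $w=+\cdots+$ of length $n-1$, obtaining $\#\calP_w=\#\calS_{i(w)}$ and thereby reducing the problem to counting a descent class of permutations of $[n]$. The one computation to carry out is the image of the all-plus word under the involution $i$: flipping the signs in the even positions sends $+\,+\,+\,+\cdots$ to $+\,-\,+\,-\cdots$, so $i(w)$ is the length-$(n-1)$ word that is $+$ in the odd positions and $-$ in the even positions. By the definition of the descent class recalled in Subsection~\ref{subsec:descentclass}, the set $\calS_{i(w)}$ is then exactly the set of up/down permutations of $[n]$.

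Finally, André's theorem identifies the number of up/down permutations in $\calS_n$ with the Euler number $E_n$, so that $\#\calP_w=\#\calS_{i(w)}=E_n$, which is the claimed equality.

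Since Theorem~\ref{thm:cyclicshape} does all of the combinatorial work, I do not expect any genuine obstacle here; the corollary is essentially a matter of specializing the theorem to the all-plus word. The only points requiring care are the index bookkeeping (the all-plus word of length $n-1$ corresponds to cyclic orders on $[n+1]$ and to permutations of $[n]$) and the verification that the involution $i$ sends the all-plus word to the up/down descent pattern, which is precisely the pattern André's theorem counts. It is also worth confirming the small cases $n=1,2$ to check that the resulting convention for $E_n$ agrees with that of Subsection~\ref{subsec:descentclass}.
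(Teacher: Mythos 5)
Your proposal is correct and matches the paper's own (very brief) argument: the paper likewise obtains the corollary by specializing Theorem~\ref{thm:cyclicshape} to the all-plus word of length $n-1$, noting that $i$ sends it to the alternating pattern $+-+-\cdots$ whose descent class is the set of up/down permutations of $[n]$, counted by $E_n$ via Andr\'e's theorem. Your index bookkeeping (word length $n-1$ $\leftrightarrow$ cyclic orders on $[n+1]$ $\leftrightarrow$ permutations of $[n]$) is exactly right.
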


As a corollary of the proof of Theorem~\ref{thm:cyclicshape}, we also obtain a new interpretation for the Entringer numbers in terms of cyclic orders. Given a total cyclic order $Z$ on $[n]$, we define for any pair of distinct elements $(i,j)\in [n]$ the \emph{content} of the arc from $i$ to $j$ to be
\begin{equation}
c_Z(i,j):=\#\left\{x\in [n] | (i,x,j) \in Z \right\}.
\end{equation}
For example, $c_Z(3,4)=0$ for the total cyclic order $Z$ depicted in Figure~\ref{fig:cyclicorder4}. Then we have the following result:

\begin{corollary}
\label{cor:Entringernumbers}
For any $1 \leq i \leq n$, the Entringer number $e_{n,i}$ is the number of total cyclic orders $Z$ on $[n+1]$ verifying the following two conditions:
\begin{enumerate}
 \item for any $1\leq j \leq n-1$, we have $(j,{j+1},{j+2})\in Z$ ;
 \item the parameter $i$ satisfies
\[
i=
\begin{cases}
1+c_Z(n,{n+1}) &\text{ if } n \text{ is odd}, \\
1+c_Z({n+1},n) &\text{ if } n \text{ is even}.
\end{cases}
\]
\end{enumerate}
\end{corollary}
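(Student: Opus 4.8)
The plan is to deduce the statement directly from the explicit bijection constructed in the proof of Theorem~\ref{thm:cyclicshape}, while tracking one extra statistic. I would specialize that proof to the all-plus word $w=+\cdots+\in\{+,-\}^{n-1}$. Then $\calP_w$ is exactly the set of total cyclic orders on $[n+1]$ satisfying condition~(1) of the corollary, and $i(w)=+-+-\cdots$ is the up/down descent pattern, so the bijection $\Phi\colon\calP_w\to\calS_{i(w)}$ identifies these cyclic orders with the up/down permutations of $[n]$. Since $e_{n,i}$ counts the up/down permutations $\sigma$ with $\sigma(n)=i$, it suffices to prove that for every $Z\in\calP_w$, writing $\sigma=\Phi(Z)$, one has
\[
\sigma(n)=\begin{cases}1+c_Z(n,{n+1})&\text{if }n\text{ is odd},\\ 1+c_Z({n+1},n)&\text{if }n\text{ is even}.\end{cases}
\]
Granting this, bijectivity of $\Phi$ immediately gives that the number of $Z\in\calP_w$ with $1+c_Z(n,{n+1})=i$ (for $n$ odd, resp.\ with $1+c_Z({n+1},n)=i$ for $n$ even) equals $\#\{\sigma\in\calS_{i(w)}:\sigma(n)=i\}=e_{n,i}$, which is the claim.

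The key step is thus to show that, through $\Phi$, the last value $\sigma(n)$ is recorded by the content of the arc joining $n$ and ${n+1}$. I expect ${n+1}$ to play a distinguished role in the construction of $\Phi$ (for instance as the element inserted last in its inductive description), its placement relative to $n$ on the circle being precisely what encodes $\sigma(n)$. Concretely I would prove that the positive arc from $n$ to ${n+1}$ (resp.\ from ${n+1}$ to $n$) contains exactly $\sigma(n)-1$ of the $n-1$ other elements, i.e.\ that its content equals $\sigma(n)-1$. The two cases reflect the alternating reading direction built into $\Phi$, the same alternation the involution $i$ encodes: when $n$ is odd the last letter of $i(w)$ is a descent and $\sigma(n)$ is a valley of $\sigma$, read off the positive arc from $n$ to ${n+1}$; when $n$ is even the last letter is an ascent and $\sigma(n)$ is a peak, read off the opposite arc. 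I would fix the orientation conventions on the smallest cases $n=2$ and $n=3$ (where, for instance, one checks by hand that the two cyclic orders counted by $E_3$ have $c_Z(3,4)\in\{0,1\}$, matching $e_{3,1}=e_{3,2}=1$ and $e_{3,3}=0$), then propagate the identity through the recursive structure of $\Phi$.

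The main obstacle is that the corollary is not self-contained: everything hinges on the precise combinatorial recipe defining $\Phi$, which I would need to unwind in order to see why the content of one specific arc, rather than any other arc statistic, is what matches $\sigma(n)-1$, and why the correct arc flips with the parity of $n$. Once the behaviour of the pair $(n,{n+1})$ under $\Phi$ is pinned down, the content computation and the final count are routine. As an alternative route that avoids dissecting $\Phi$, one could instead verify that the refined cyclic-order counts (with the parity-dependent arc) satisfy the boustrophedon recurrence of Subsection~\ref{subsec:boustrophedon} together with its boundary conditions, and invoke uniqueness of the solution to identify them with the $e_{n,i}$; but the bijective argument is the more natural one here, given that the statement is phrased as a corollary of the proof of Theorem~\ref{thm:cyclicshape}.
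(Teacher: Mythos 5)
Your proposal is correct and follows the paper's own route: the paper derives the corollary directly from formula~\eqref{eq:defalpha}, which sets $\sigma(n)=\alpha=1+c_Z(n,{n+1})$ for $n$ odd and $\sigma(n)=n-c_Z(n,{n+1})=1+c_Z({n+1},n)$ for $n$ even, exactly the arc-content statistic you propose to track through the bijection. The only detail left implicit in your sketch, the identity $c_Z(n,{n+1})+c_Z({n+1},n)=n-1$ used in the even case, is immediate.
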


\section{Enumeration of \texorpdfstring{$\calQ_w^{\eta}$}{Q,w,eta} and \texorpdfstring{$\calR_w^{\eta_1,\eta_2}$}{R,w,eta1,eta2} and boustrophedons of higher dimensions}
\label{sec:secondenumeration}

\subsection{Boustrophedons}
\label{subsec:boustrophedon}

The classical way to compute the Euler and Entringer numbers is to set up a triangular array of numbers, called either the \emph{boustrophedon} or the Seidel-Entringer-Arnold triangle. Each line of the array is obtained by a linear transformation of the previous line, and the answer is read on the bottom line (see Figure~\ref{fig:boustrophedon}).

\begin{figure}[htpb]
\[
\begin{matrix}
&&&&1&&&& \\
&&&&&&&& \\
&&&0&&1&&& \\
&&&&&&&& \\
&&1&&1&&0&& \\
&&&&&&&& \\
&0&&1&&2&&2& \\
&&&&&&&& \\
5&&5&&4&&2&&0 \\
\end{matrix}
\]
\caption{Computation of the Entringer numbers $e_{n,i}$ for $1\leq i \leq n\leq5$ using the boustrophedon method. The number $e_{n,i}$ is the $i$-th number (counting from the left) on the $n$-th line. Here each entry of a line of even (resp. odd) index is obtained by taking the sum of the entries of the previous line that lie to its left (resp. right).}
\label{fig:boustrophedon}
\end{figure}

Viennot~\cite{V79} extended this construction to obtain the cardinality of any descent class $\calS_w$. Thus, by Theorem~\ref{thm:cyclicshape}, one can compute the cardinality of $\calP_w$ by means of a linear inductive process.

Unlike the case of $\calP_w$, it seems that we cannot reduce the question of enumerating $\calQ_w^{\eta}$ and $\calR_w^{\eta_1,\eta_2}$ to the enumeration of some nice or known class of permutations. For example, while $(\#\calP_{+^n})_{n\geq1}$ (where $+^n$ denotes the word of length $n$ with all letters equal to $+$) is the sequence of Euler numbers by Corollary~\ref{cor:Eulernumbers}, the sequences $(\#\calQ_{+^n}^+)_{n\geq1}$ and $(\#\calR_{+^n}^{+,+})_{n\geq1}$ are currently absent from the On-Line Encyclopedia of Integer Sequences~\cite{OEIS} (see Table~\ref{tab:firstterms} for the first few values). 

\begin{table}[htbp]
\centering
\begin{tabular}{|c|c|c|c|c|c|c|c|c|c|c|}
  \hline
  $n$ & 1& 2 & 3 & 4 & 5 & 6 & 7 & 8 & 9 & 10 \\
  \hline
  $\#\calP_{+^n}$ & 1 & 2 & 5 & 16 & 61 & 272 & 1385 & 7936 & 50521 & 353792 \\
  \hline
  $\#\calQ_{+^n}^+$ & 1 & 1 & 3 & 11 & 38 & 169 & 899 & 5047 & 31914 & 226205 \\
  \hline
  $\#\calR_{+^n}^{+,+}$ & 1 & 1 & 2 & 9 & 31 & 128 & 708 & 4015 & 24865 & 177444 \\
  \hline
\end{tabular}
\caption{The first ten values of the cardinalities of the sets $\calP_{+^n}$, $\calQ_{+^n}^+$ and $\calR_{+^n}^{+,+}$. The first sequence corresponds to the Euler up/down numbers. We formulate in Section~\ref{sec:conjecture} a conjecture regarding the asymptotic ratio of the terms of these sequences.}
\label{tab:firstterms}
\end{table}

However, we solve these enumeration questions by introducing linear recurrences that are higher-dimensional versions of the boustrophedon. The boustrophedon can be seen as the time-evolution of a sequence of numbers, where at time $t$ the sequence has length $t$ and the sequence at time $t+1$ is obtained from the sequence at time $t$ by a linear transformation.

The enumeration of $\calQ_w^{\eta}$ (resp. $\calR_w^{\eta_1,\eta_2}$) is done via the time-evolution of triangles of numbers (resp. tetrahedra of numbers), where at time $t$ the triangles (resp. tetrahedra) have side-length $t$ and the triangles (resp. tetrahedra) at time $t+1$ are obtained from the triangles (resp. tetrahedra) at time $t$ by a linear transformation. We will introduce a family of operators $\Phi_{a,b,c}$ and show that the recursions for $\calP_w$, $\calQ_w^{\eta}$ and $\calR_w^{\eta_1,\eta_2}$ can all be expressed simply using these operators.

Foata and Han~\cite{FH14} also studied the evolution of triangular arrays of numbers in order to enumerate Bi-Entringer numbers, which count up/down permutations $\sigma\in\calS_n$ with prescribed values of $\sigma(1)$ and $\sigma(n)$. Our $\Phi$ operators can also be used to describe their recurrence formulas.

\subsection{Enumeration of \texorpdfstring{$\calQ_w^{\eta}$}{Q,w,eta}}

In order to enumerate $\calQ_w^{\eta}$ or $\calR_w^{\eta_1,\eta_2}$, we need to refine these classes by fixing finitely many reference points and introducing a parameter to specify the content of each arc between two consecutive reference points. Given a total cyclic order $Z$ on a set $X$ and $(y_1,\ldots,y_p)$ a $p$-tuple of distinct elements in $X$, we define the \emph{multi-content} $\tilde{c}_Z(y_1,\ldots,y_p)$ to be the $p$-tuple
\begin{equation}
\tilde{c}_Z(y_1,\ldots,y_p):=(c_Z(y_1,y_2),c_Z(y_2,y_3),\ldots,c_Z(y_{p-1},y_p),c_Z(y_p,y_1)).
\end{equation}

For any $n\geq3, w\in\left\{+,-\right\}^{n-2}$ and nonnegative integers $i,j,k$ such that $i+j+k=n-3$, set
\begin{align}
f_{w,i,j,k}^+ &:= \# \left\{ Z\in\calQ_w^+ | \tilde{c}_Z({n-1},n,1)=(i,j,k) \right\} \\
f_{w,i,j,k}^- &:= \# \left\{ Z\in\calQ_w^- | \tilde{c}_Z(n,{n-1},1)=(i,j,k) \right\}.
\end{align}
These numbers provide a refined enumeration of $\calQ_w^+$ and $\calQ_w^-$ according to the content of each arc between the elements $n-1$, $n$ and $1$, playing the same role as the role played by the Entringer numbers for the Euler numbers. Just like the Entringer number, these numbers $f_{w,i,j,k}^+$ and $f_{w,i,j,k}^-$ satisfy some linear recurrence relations.

If $w\in\left\{+,-\right\}^n$, we denote by $w+$ (resp. $w-$) the word on $n+1$ letters obtained by adding the letter $+$ (resp $-$) at the end of the word $w$. We then have the following recurrence relations:

\begin{theorem}
\label{thm:extendedcoefficients}
For any $n\geq1$, $w\in\left\{+,-\right\}^n$ and nonnegative integers $i,j,k$ such that $i+j+k=n$, we have
\begin{align}
f_{w+,i,j,k}^+ &=\sum_{j'=0}^{j-1} f_{w,i+j-1-j',j',k}^- + \sum_{k'=0}^{k-1} f_{w,k-1-k',i+j,k'}^+ \label{eq:firstlinearrecurrence} \\
f_{w+,i,j,k}^- &=\sum_{i'=0}^{i-1} f_{w,i',j,i+k-1-i'}^+ \label{eq:secondlinearrecurrence} \\
f_{w-,i,j,k}^+ &=\sum_{i'=0}^{i-1} f_{w,i',i+j-1-i',k}^- \\
f_{w-,i,j,k}^- &=\sum_{k'=0}^{k-1} f_{w,i+k-1-k',j,k'}^+ + \sum_{j'=0}^{j-1} f_{w,j-1-j',j',i+k}^-. 
\end{align}
\end{theorem}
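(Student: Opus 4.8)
The plan is to prove all four recurrences by one and the same device: deleting the largest element $n+3$ and reading off how the multi-content is redistributed. Fix $w\in\{+,-\}^n$ and, to be concrete, consider a total cyclic order $Z'\in\calQ_{w+}^+$ on $[n+3]$ with $\tilde c_{Z'}(n+2,n+3,1)=(i,j,k)$, the case governing \eqref{eq:firstlinearrecurrence}. First I would draw $Z'$ on the circle and record the arcs cut out by the reference triple $(n+2,n+3,1)$: going in the positive direction one meets $n+2$, then an arc $A_i$ of $i$ elements, then $n+3$, then an arc $A_j$ of $j$ elements, then $1$, then an arc $A_k$ of $k$ elements, and back to $n+2$.

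Next I would delete $n+3$ to form $Z:=Z'|_{[n+2]}$. The restriction of a total cyclic order to a subset is again a total cyclic order, and every defining relation of $\calP_w$ involves only a triple $(\ell,\ell+1,\ell+2)$ with $\ell\le n$, hence involves no element larger than $n+2$ and survives the deletion; so $Z\in\calP_w$ on $[n+2]$. The crux is then to locate the element $n+1$ among the three arcs, which is forced by the word relation $(n+1,n+2,n+3)\in Z'$ coming from the last letter of $w+$. A direct inspection of the circular picture shows that this relation forbids $n+1$ from lying in $A_i$ and permits it only in $A_j$ or in $A_k$, and these two alternatives are exactly the source of the two sums in \eqref{eq:firstlinearrecurrence}: if $n+1\in A_k$ then after deletion $(n+1,n+2,1)\in Z$, so $Z\in\calQ_w^+$, whereas if $n+1\in A_j$ then $(1,n+2,n+1)\in Z$, so $Z\in\calQ_w^-$.

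I would then track the multi-content through the deletion. In the first alternative, writing $a$ and $b$ for the numbers of elements of $A_k$ lying respectively before and after $n+1$ (so $a+b=k-1$), deleting $n+3$ fuses $A_i$ and $A_j$ and gives $\tilde c_Z(n+1,n+2,1)=(b,\,i+j,\,a)$; setting $k'=a$ this is the term $f_{w,k-1-k',i+j,k'}^+$, and letting $a$ run from $0$ to $k-1$ produces the corresponding sum. The second alternative is entirely analogous, computed from the reference triple $(n+2,n+1,1)$, and yields the family $f_{w,i+j-1-j',j',k}^-$. Finally I would verify bijectivity: for a fixed target $(i,j,k)$ and a fixed value of the summation index, the deletion map is invertible, the inverse re-inserting $n+3$ into the single gap of the fused arc prescribed by $i$ (the position of $n+1$ being already encoded in $Z$); hence the disjoint union over the index counts each $Z'$ exactly once, which is precisely \eqref{eq:firstlinearrecurrence}.

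The remaining three identities are proved in the same way, the only differences being cosmetic. For the $\calQ^-$ classes one uses the reference triple $(n+3,n+2,1)$, and for the letter $-$ one replaces the word relation by $(n+3,n+2,n+1)\in Z'$; each time the single inspection of where this relation allows $n+1$ to sit determines whether the element is confined to one arc (giving the single sum of \eqref{eq:secondlinearrecurrence} and of the third identity) or may occupy two arcs (giving the two sums of \eqref{eq:firstlinearrecurrence} and of the fourth identity). I expect the main obstacle to be purely organisational: keeping the three arc-lengths straight through the fusion caused by the deletion, and checking that in each of the four cases the dichotomy on the position of $n+1$ is both exhaustive and disjoint. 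Once the positive circular word is written out explicitly, each case reduces to the same short and mechanical content computation carried out above.
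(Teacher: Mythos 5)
Your proposal is correct and takes essentially the same route as the paper: its Lemma~\ref{lem:removing} sets up exactly this delete-the-largest-element bijection, with your case analysis on which arc contains the second-largest element matching the paper's dichotomy on whether $\bar{\partial}Z$ lands in a $\calQ_w^+$ or a $\calQ_w^-$ class, and your re-insertion inverse matching the map $\iota$. The content bookkeeping and the resulting index ranges agree with the paper's in every case.
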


See Section~\ref{sec:proof2} for the proof of Theorem~\ref{thm:extendedcoefficients}. For fixed $n\geq1$, $w\in\left\{+,-\right\}^n$ and $\eta\in\left\{+,-\right\}$, the collection
\[
T_w^{\eta}:=\left\{f_{w,i,j,k}^{\eta}|i+j+k=n-1\right\}
\]
forms a triangular array of numbers, and Theorem~\ref{thm:extendedcoefficients} gives a recursive way to compute the cardinality of any $\calQ_w^{\eta}$: compute the evolution of the pair of triangular arrays $(T_{w'}^+,T_{w'}^-)$, for $w'$ a prefix of $w$ of increasing length, until one gets to $T_w^{\eta}$, then take the sum of all the entries of $T_w^{\eta}$ (see Figure~\ref{fig:triangleevolution}). The recurrences are initialized as follows:

\begin{equation}
\begin{array}{ccccc}
f_{+,0,0,0}^+ & = & f_{-,0,0,0}^- &=& 1 \\
f_{-,0,0,0}^+ & = & f_{+,0,0,0}^- &=& 0. \\
\end{array}
\end{equation}

\begin{figure}[htpb]
\[
\begin{array}{rccrc}
T_+^+=& 1 && T_+^-= &0 \\
&&& \\
&&& \\
&&& \\
T_{++}^+=
&\begin{matrix}
 & 0 & \\
 &&\\
0 & & 1\\
\end{matrix}
&&T_{++}^-=
&\begin{matrix}
 & 1 & \\
 &&\\
0 & & 0\\
\end{matrix} \\
&&& \\
&&& \\
&&& \\
T_{+++}^+=
&\begin{matrix}
&&0&&\\
&&&&\\
&1&&0&\\
&&&&\\
1&&0&&1\\
\end{matrix}
&&T_{+++}^-=
&\begin{matrix}
&&1&&\\
&&&&\\
&0&&1&\\
&&&&\\
0&&0&&0\\
\end{matrix} \\
&&& \\
&&& \\
&&& \\
T_{++++}^+=
&\begin{matrix}
&&&0&&&\\
&&&&&&\\
&&1&&1&&\\
&&&&&&\\
&1&&2&&1&\\
&&&&&&\\
1&&2&&1&&1\\
\end{matrix}
&&T_{++++}^-=
&\begin{matrix}
&&&1&&&\\
&&&&&&\\
&&1&&1&&\\
&&&&&&\\
&1&&0&&1&\\
&&&&&&\\
0&&0&&0&&0\\
\end{matrix}
\end{array}
\]
\caption{Pairs of triangular arrays of numbers of growing size used to enumerate $\calQ_{++++}^\eta$. The bottom, right and left sides of each triangle respectively correspond to $i=0$, $j=0$ and $k=0$. Taking the sum of the entries in $T_{++++}^+$, one obtains $\#\calQ_{++++}^+=11$, as indicated in Table~\ref{tab:firstterms}.}
\label{fig:triangleevolution}
\end{figure}

The linear recursions can be rewritten in a more compact way, by introducing the multivariate generating series of the numbers $f_{w,i,j,k}^+$ and $f_{w,i,j,k}^-$ and defining some linear operators acting on these generating functions. Fix $m\geq 2$ and $1\leq a,b,c \leq m$ to be three integers such that $b\neq c$ ($a$ may be equal to $b$ or $c$). We define the linear endomorphism $\Phi_{a,b,c}$ of $\Z[X_1,\ldots,X_m]$ by its action on monomials: for any $i_1,\ldots,i_m\geq0$,
\begin{equation}
\Phi_{a,b,c}\left(\prod_{\ell=1}^m X_\ell^{i_\ell} \right) = \left(\prod_{\substack{\ell=1 \\ \ell\notin\left\{b,c\right\}}}^m X_{\ell}^{i_\ell} \right)X_a^{i_b+1} \sum_{k=0}^{i_c} X_b^{i_c-k}X_c^{k}.
\end{equation}
Note that $\Phi_{a,b,c}$ maps any homogeneous polynomial of degree $d$ to a homogeneous polynomial of degree $d+1$.

For any $n\geq1$, $w\in\left\{+,-\right\}^n$ and $\eta\in\left\{+,-\right\}$, we form the generating function
\begin{equation}
Q_w^{\eta}(X_1,X_2,X_3):=\sum_{\substack{i,j,k\geq0 \\ i+j+k=n-1}} f_{w,i,j,k}^\eta X_1^i X_2^j X_3^k.
\end{equation}

Then Theorem~\ref{thm:extendedcoefficients} can be rewritten as follows.

\begin{theorem}
\label{thm:extendedgf}
For any $n\geq1$ and $w\in\left\{+,-\right\}^n$, we have
\begin{align}
Q_{w+}^+ &=\Phi_{2,2,1}(Q_w^-) + \Phi_{3,1,2}(Q_w^+) \\
Q_{w+}^- &=\Phi_{1,1,3}(Q_w^+) \\
Q_{w-}^+ &=\Phi_{1,1,2}(Q_w^-) \\
Q_{w-}^- &=\Phi_{3,3,1}(Q_w^+) + \Phi_{2,1,3}(Q_w^-).
\end{align}
\end{theorem}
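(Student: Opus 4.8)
The plan is to prove Theorem~\ref{thm:extendedgf} as a direct reformulation of Theorem~\ref{thm:extendedcoefficients}, which I assume as already established. The four operator identities contain exactly the same information as the four scalar recurrences, only packaged into generating functions, so the entire argument reduces to a coefficient-by-coefficient verification: I would compute the action of each operator $\Phi_{a,b,c}$ on an arbitrary monomial $X_1^i X_2^j X_3^k$, apply it termwise to the relevant generating function $Q_w^{\pm}$, extract the coefficient of a target monomial $X_1^I X_2^J X_3^K$, and check by a change of summation variable that it agrees with the corresponding sum in Theorem~\ref{thm:extendedcoefficients}. A preliminary sanity check confirms the setup is consistent: $Q_w^{\eta}$ is homogeneous of degree $n-1$ when $|w|=n$, each $\Phi_{a,b,c}$ raises the homogeneous degree by $1$, and $Q_{w+}^{\eta}$ (with $|w+|=n+1$) must have degree $n$, so the degrees match.

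Concretely, for the first identity $Q_{w+}^+ = \Phi_{2,2,1}(Q_w^-) + \Phi_{3,1,2}(Q_w^+)$, I would first record from the definition of $\Phi_{a,b,c}$ that
\[
\Phi_{2,2,1}(X_1^i X_2^j X_3^k) = \sum_{\ell=0}^{i} X_1^{\ell} X_2^{i+j+1-\ell} X_3^k,
\qquad
\Phi_{3,1,2}(X_1^i X_2^j X_3^k) = \sum_{\ell=0}^{j} X_1^{j-\ell} X_2^{\ell} X_3^{i+k+1}.
\]
Applying the first of these termwise to $Q_w^- = \sum f_{w,i,j,k}^- X_1^i X_2^j X_3^k$, the coefficient of $X_1^I X_2^J X_3^K$ is obtained by imposing $\ell=I$, $k=K$ and $i+j=I+J-1$; writing $j'=j$ one recovers precisely the sum $\sum_{j'=0}^{J-1} f_{w,I+J-1-j',j',K}^-$. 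Likewise, applying the second to $Q_w^+$ and collecting the same monomial forces $\ell=J$, $j=I+J$ and $i+k=K-1$, which after setting $k'=k$ reproduces exactly $\sum_{k'=0}^{K-1} f_{w,K-1-k',I+J,k'}^+$. The sum of the two matches the right-hand side of~\eqref{eq:firstlinearrecurrence}, proving the first identity.

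The remaining three identities are handled in the same manner, the only changes being which operators appear and which variables play the roles of $b$ and $c$: for $Q_{w+}^- = \Phi_{1,1,3}(Q_w^+)$ one checks against~\eqref{eq:secondlinearrecurrence}, and the two identities for $Q_{w-}^{\pm}$ are verified against the last two recurrences of Theorem~\ref{thm:extendedcoefficients} in the same way. In each case the summation in $\Phi_{a,b,c}$ over the index $k$ from $0$ to $i_c$ is what produces the single summation appearing in the corresponding scalar recurrence, and the exponent shift $i_b \mapsto i_b+1$ accounts for the $-1$ inside the summands. I expect no conceptual obstacle here; the only delicate point, and the one where care is genuinely required, is the index bookkeeping, namely correctly reading off the constraints on the summation bounds (such as $j\ge 0$ forcing $i\le I+J-1$) and performing the change of variable that turns the coefficient-extraction sum into the exact form written in Theorem~\ref{thm:extendedcoefficients}. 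Once the action of each operator on a monomial is written out explicitly, as above, these substitutions are routine and the four identities follow at once.
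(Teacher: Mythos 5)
Your proposal is correct and follows essentially the same route as the paper, which derives Theorem~\ref{thm:extendedgf} from Theorem~\ref{thm:extendedcoefficients} via Lemma~\ref{lem:generatingfunction} (a coefficient-matching statement for the operators $\Phi_{a,b,c}$ that the paper itself describes as a straightforward computation). Your explicit expansion of $\Phi_{2,2,1}$ and $\Phi_{3,1,2}$ on monomials and the resulting identification with the two sums in~\eqref{eq:firstlinearrecurrence} is accurate, including the index bookkeeping.
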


\begin{remark}
\label{rem:classicalcase}
The $\Phi$ operators can be used to enumerate $\calP_w$, which by Theorem~\ref{thm:cyclicshape} corresponds to the classical boustrophedon and its extension by Viennot in~\cite{V79}. For any $n\geq3$, $w\in\left\{+,-\right\}^{n-2}$ and nonnegative integers $i,j$ such that $i+j=n-2$, set
\begin{equation}
e_{w,i,j}:=
\begin{cases}
\# \left\{Z\in\calP_w | \tilde{c}_Z({n-1},n)=(i,j)\right\} &\text{ if } n \text{ is even} \\
\# \left\{Z\in\calP_w | \tilde{c}_Z({n-1},n)=(j,i)\right\} &\text{ if } n \text{ is odd} 
\end{cases}
\end{equation}
and define the generating function
\begin{equation}
P_w(X_1,X_2):=\sum_{\substack{i,j\geq 0 \\ i+j=n-2}} e_{w,i,j} X_1^iX_2^j.
\end{equation}
Then we have
\begin{align}
P_{w+}&=
\begin{cases}
\Phi_{1,1,2}(P_w) &\text{ if } n \text{ is even} \\
\Phi_{2,2,1}(P_w) &\text{ if } n \text{ is odd} 
\end{cases} \\
P_{w-}&=
\begin{cases}
\Phi_{2,2,1}(P_w) &\text{ if } n \text{ is even} \\
\Phi_{1,1,2}(P_w) &\text{ if } n \text{ is odd} .
\end{cases}
\end{align}
\end{remark}

\begin{remark}
\label{rem:foatahan}
Foata and Han~\cite{FH14} introduced and studied Seidel triangle sequences, which are sequences $(A_n)_{n\geq1}$ of triangular arrays of numbers of growing size satisfying some linear recursion relations. One may reformulate their definition using the $\Phi$ operators as follows. Fix $H$ to be an arbitrary infinite triangular array of numbers (the $n$-th line of $H$ contains $n$ numbers). For any $n\geq1$, denote by $T_n(H)$ the triangular array of numbers of side-length $n$ defined as follows: the entries of $T_n(H)$ are constant along rows of constant $j$-coordinate and the last line of $T_n(H)$ is equal to the $n$-th line of $H$ (see Figure~\ref{fig:examplefh} for an example).

\begin{figure}[htpb]
\[
\begin{matrix}
&&h^{(3)}_3&& \\
&&&& \\
&h^{(3)}_2&&h^{(3)}_3& \\
&&&& \\
h^{(3)}_1&&h^{(3)}_2&&h^{(3)}_3 \\
\end{matrix}
\]
\caption{Triangle $T_3(H)$ when the third line of $H$ is given from left to right by $h^{(3)}_1,h^{(3)}_2,h^{(3)}_3$.}
\label{fig:examplefh}
\end{figure}

Then the Seidel triangle sequence built from $H$ is defined to be the sequence of triangular arrays $(A_n)_{n\geq1}$ such that $A_1=T_1(H)$ and for any $n\geq2$, $A_n=\Phi_{1,1,3}(A_{n-1})+T_n(H)$. Note that here we slightly abused the notation by applying $\Phi_{1,1,3}$ to a triangular array of numbers instead of its generating function.
\end{remark}

\subsection{Enumeration of \texorpdfstring{$\calR_w^{\eta_1,\eta_2}$}{R,w,eta1,eta2}}

In order to obtain linear recurrence relations to compute the cardinality of $\calR_w^{\eta_1,\eta_2}$, we need to define six types of subsets, distinguished according to the cyclic order of the four elements $1,2,n-1$ and $n$. If $Z$ is a total cyclic order on a finite set $X$ and $y_1,\ldots,y_p$ are $p$ distinct elements of $X$ (with $p\geq 3$), we say that $(y_1,\ldots,y_p)$ forms a $Z$-chain if for any $3 \leq i \leq p$, $(y_1,y_{i-1},y_i)\in Z$. We denote by $\calC_Z$ the set of $Z$-chains. Using this notation, we define for any $n\geq4$ and $w\in\left\{+,-\right\}^{n-2}$ the following sets:
\begin{align}
\calR_w^{(1)}&:=\left\{ Z\in\calP_w | (1,2,{n-1},n) \in \calC_Z  \right\} \\
\calR_w^{(2)}&:=\left\{ Z\in\calP_w | (1,{n-1},2,n) \in\calC_ Z  \right\} \\
\calR_w^{(3)}&:=\left\{ Z\in\calP_w | (1,{n-1},n,2) \in \calC_Z  \right\} \\
\calR_w^{(4)}&:=\left\{ Z\in\calP_w | (1,2,n,{n-1}) \in \calC_Z  \right\} \\
\calR_w^{(5)}&:=\left\{ Z\in\calP_w | (1,n,2,{n-1}) \in \calC_Z  \right\} \\
\calR_w^{(6)}&:=\left\{ Z\in\calP_w | (1,n,{n-1},2) \in \calC_Z  \right\}.
\end{align}
Note that for any $n\geq2$ and $w\in\left\{+,-\right\}^n$,
\begin{align}
\calR_w^{+,+}&=\calR_w^{(1)} \sqcup \calR_w^{(2)} \\
\calR_w^{+,-}&=\calR_w^{(3)} \\
\calR_w^{-,+}&=\calR_w^{(4)} \\
\calR_w^{-,-}&=\calR_w^{(5)} \sqcup \calR_w^{(6)}.
\end{align}

While the enumeration of $\calQ_w^+$ and $\calQ_w^-$ was performed by refining according to the multi-content associated with the reference points $1$, $n-1$ and $n$, we enumerate each class $\calR_w^{(6)}$ with $1\leq \alpha \leq 6$ by refining according to the multi-content associated with the four reference points $1$, $2$, $n-1$ and $n$. For any $n\geq4$ and nonnegative integers $i,j,k,\ell$ such that $i+j+k+\ell=n-4$, set:

\begin{align}
g_{w,i,j,k,\ell}^{(1)} &:=
\# \left\{ Z\in\calR_w^{(1)} | \tilde{c}_Z({n-1},n,1,2)=(i,j,k,\ell) \right\} \\
g_{w,i,j,k,\ell}^{(2)} &:=
\# \left\{ Z\in\calR_w^{(2)} | \tilde{c}_Z({n-1},2,n,1)=(i,\ell,j,k) \right\} \\
g_{w,i,j,k,\ell}^{(3)} &:= 
\# \left\{ Z\in\calR_w^{(3)} | \tilde{c}_Z(n,2,1,{n-1})=(i,j,k,\ell) \right\} \\
g_{w,i,j,k,\ell}^{(4)} &:= 
\# \left\{ Z\in\calR_w^{(4)} | \tilde{c}_Z(n,{n-1},1,2)=(i,j,k,\ell) \right\} \\
g_{w,i,j,k,\ell}^{(5)} &:=
\# \left\{ Z\in\calR_w^{(5)} | \tilde{c}_Z(n,2,{n-1},1)=(i,\ell,j,k) \right\} \\
g_{w,i,j,k,\ell}^{(6)} &:=
\# \left\{ Z\in\calR_w^{(6)} | \tilde{c}_Z({n-1},2,1,n)=(i,j,k,\ell) \right\}.
\end{align}

For any $1\leq \alpha \leq 6$, $n\geq2$ and $w\in\left\{+,-\right\}^n$, the collection
\[
\left\{g_{w,i,j,k,\ell}^{(\alpha)} | i+j+k+\ell = n-2\right\}
\]
forms a tetrahedral array of numbers. We provide linear recurrence formulas for these arrays directly in the language of generating functions (we skip the less compact formulation in terms of sequences indexed by $i,j,k,\ell$). For any $1 \leq \alpha \leq 6$, $n\geq2$ and $w\in\left\{+,-\right\}^n$, we set
\begin{equation}
R_w^{(\alpha)}(X_1,X_2,X_3,X_4):=\sum_{\substack{i,j,k,\ell\geq0 \\ i+j+k+\ell=n-2}} g_{w,i,j,k,\ell}^{(\alpha)} X_1^i X_2^j X_3^k X_4^\ell.
\end{equation}

Then we can express the recurrence relations among the $R_w^{(\alpha)}$'s by using the operators $\Phi_{a,b,c}$ defined above:

\begin{theorem}
\label{thm:fullgf}
For any $n\geq2$ and $w\in\left\{+,-\right\}^n$, we have
\begin{align}
R_{w+}^{(1)}&=\Phi_{4,1,2}(R_{w}^{(1)} ) +  \Phi_{3,1,2}(R_{w}^{(2)} ) + \Phi_{2,2,1}(R_{w}^{(4)} ) \\
R_{w+}^{(2)}&=\Phi_{3,4,2}(R_{w}^{(3)} ) +  \Phi_{2,2,4}(R_{w}^{(5)} ) \\
R_{w+}^{(3)}&=\Phi_{3,4,1}(R_{w}^{(3)} ) +  \Phi_{2,4,1}(R_{w}^{(5)} ) + \Phi_{1,1,4}(R_{w}^{(6)} ) \\
R_{w+}^{(4)}&=\Phi_{1,1,4}(R_{w}^{(1)} )   \\
R_{w+}^{(5)}&=\Phi_{4,1,3}(R_{w}^{(1)} )  + \Phi_{1,1,3}(R_{w}^{(2)} )  \\
R_{w+}^{(6)}&=\Phi_{4,4,3}(R_{w}^{(3)} )  \\
R_{w-}^{(1)}&=\Phi_{1,1,2}(R_{w}^{(4)} )  \\
R_{w-}^{(2)}&=\Phi_{1,4,2}(R_{w}^{(6)} ) + \Phi_{4,4,2}(R_{w}^{(5)} ) \\
R_{w-}^{(3)}&=\Phi_{4,4,1}(R_{w}^{(6)} )  \\
R_{w-}^{(4)}&=\Phi_{2,1,4}(R_{w}^{(4)} ) + \Phi_{3,1,4}(R_{w}^{(2)} ) + \Phi_{4,4,1}(R_{w}^{(1)} ) \\
R_{w-}^{(5)}&= \Phi_{2,1,3}(R_{w}^{(4)} ) + \Phi_{3,3,1}(R_{w}^{(2)} ) \\
R_{w-}^{(6)}&=\Phi_{1,4,3}(R_{w}^{(6)} ) + \Phi_{2,4,3}(R_{w}^{(5)} ) + \Phi_{3,3,4}(R_{w}^{(3)} ).
\end{align}
\end{theorem}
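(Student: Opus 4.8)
The plan is to prove the twelve identities by the same insertion/deletion bijection that underlies Theorem~\ref{thm:extendedcoefficients}, now carried out with four reference points and the six chain-types instead of three reference points and two types. Throughout, write $N=n+3$ for the largest element, so that a word $w\epsilon$ of length $n+1$ corresponds to total cyclic orders on $[N]$ with reference points $1,2,N-1,N$, while $w$ of length $n$ corresponds to orders on $[N-1]$ with reference points $1,2,N-2,N-1$. The central object is the deletion map $D$ sending a total cyclic order $Z'$ on $[N]$ to its restriction $Z:=Z'\cap[N-1]^3$; one first checks, using the cyclicity, asymmetry and transitivity axioms, that the restriction of a total cyclic order is again one and that $D$ maps $\calP_{w\epsilon}$ into $\calP_w$, since erasing $N$ removes only the constraint indexed by $i=n+1$ and preserves all $(i,{i+1},{i+2})$ with $i\leq n$. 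The recurrences are then read in the reverse (insertion) direction: given $Z\in\calR_w^{(\beta)}$ I reinsert $N$ into the circle to build the elements of $\calR_{w\epsilon}^{(\alpha)}$ lying above it.

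First I would localize the new element. For $\epsilon=+$ the defining constraint of $\calP_{w\epsilon}$ is $(n+1,n+2,N)\in Z'$, which forces $N$ to lie on the open arc running positively from $N-1=n+2$ back to $N-2=n+1$; for $\epsilon=-$ it lies on the complementary arc. Simultaneously, passing from $Z$ to $Z'$ promotes $N-2=n+1$ from a reference point of $Z$ to an interior point of $Z'$ and introduces $N$ as a new reference point, the three points $1,2,N-1=n+2$ remaining reference points throughout. Thus each insertion is governed by two independent choices: the arc of $Z$ (among those allowed by the $\epsilon$-constraint) into which $N$ is dropped, and the resulting position of the demoted point $n+1$. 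For each source type $\beta$ I would record exactly which arcs are admissible and which target type $\alpha$, namely the cyclic order of $1,2,N-1,N$, each choice produces.

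The heart of the argument is the content bookkeeping, which I expect to reproduce the operators $\Phi_{a,b,c}$ verbatim. Inserting $N$ into an arc of content $i_c$ splits that arc into two sub-arcs of contents $i_c-k$ and $k$ for $0\leq k\leq i_c$; this is precisely the factor $\sum_{k=0}^{i_c}X_b^{i_c-k}X_c^k$, the two pieces inheriting the new labels $b$ and $c$. At the same time the demoted reference point $n+1$ becomes interior to one specific new arc, merging the content formerly separated by it and contributing the $+1$; this is the factor $X_a^{i_b+1}$, where $a$ labels the arc that swallows $n+1$ and $i_b$ is the content transported there. The remaining arcs are untouched, giving $\prod_{\ell\notin\{b,c\}}X_\ell^{i_\ell}$. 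Matching this geometric triple (arc split, arc absorbing $n+1$, target labelling) against the subscripts $(a,b,c)$ in each of the twelve cases, and summing over the admissible source types $\beta$, yields the stated identities; the decompositions $\calR_w^{+,+}=\calR_w^{(1)}\sqcup\calR_w^{(2)}$ and the like then guarantee that these refined recurrences compute the desired cardinalities.

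The main obstacle is not any single conceptual point but the exhaustive casework. For each of the six types $\alpha$ and each sign $\epsilon$ one must determine the admissible source types $\beta$, verify that every admissible insertion lands in $\calR_{w\epsilon}^{(\alpha)}$ and, conversely, that every element of $\calR_{w\epsilon}^{(\alpha)}$ arises uniquely this way, so that $D$ is a fiberwise bijection, and then read off the correct triple $(a,b,c)$ together with the variable identifications forced by the cyclic reindexing of the four reference points. The delicate part is that the labelling of the four arcs is tied to the orientation of the chain defining each $\calR_w^{(\alpha)}$, as witnessed by the deliberately permuted argument orders in the definitions of $g_{w,i,j,k,\ell}^{(2)}$ and $g_{w,i,j,k,\ell}^{(5)}$. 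I would therefore fix once and for all a convention relating the tuple $(i,j,k,\ell)$ to the four arcs in each type, and check that the demotion of $n+1$ and the insertion of $N$ act on the intended coordinates; keeping these identifications consistent across all twelve lines is where essentially all of the work lies.
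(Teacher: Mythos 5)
Your proposal follows the same route as the paper: delete the largest element, use transitivity to identify which source chain-types can occur, track the multi-contents through the insertion of $N$ and the demotion of $N-2$, and translate the resulting index relations into the operators $\Phi_{a,b,c}$ --- exactly what the paper's Lemmas~\ref{lem:removing} and~\ref{lem:generatingfunction} do for the $\calQ$ case before asserting that the $\calR^{(\alpha)}$ case is analogous. Your geometric reading of $\Phi_{a,b,c}$ (source arc $c$ split by the new point into target arcs $b$ and $c$, the demoted reference point absorbed into target arc $a$ together with source arc $b$) correctly matches the paper's index sets $I_{a,b,c}$, so what remains is only the case-by-case verification you describe.
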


The recurrences are initialized as follows:
\[
R_{++}^{(1)}=R_{--}^{(2)}=R_{-+}^{(3)}=R_{+-}^{(4)}=R_{++}^{(5)}=R_{--}^{(6)}=1
\]
and all other $R_{\epsilon_1,\epsilon_2}^{(\alpha)}$ are set to $0$. See Section~\ref{sec:proof2} for the proof of Theorem~\ref{thm:fullgf}. 

\section{Proof of Theorem~\ref{thm:cyclicshape}}
\label{sec:proof1}

\subsection{Proof idea}

A total cyclic order $Z$ on $[n]$ can be viewed as a way to place the numbers $1$ to $n$ on a circle, such as on Figure~\ref{fig:cyclicorder4}, where only the relative positions of the numbers matter and are prescribed by $Z$. A permutation in $\calS_n$ can be viewed as a diagram of dots where only the relative positions of the dots matter, such as on Figure~\ref{fig:permutation4}.

\begin{figure}[htpb]
\centering
\includegraphics[height=.8in]{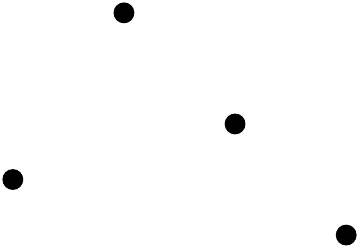}
\caption{This dot diagram represents the permutation $2431$, because when reading the dots from left to right, the first dot is the second lowest, the second is the highest, the third is the second highest and the fourth is the lowest.}
\label{fig:permutation4}
\end{figure}

To prove Theorem~\ref{thm:cyclicshape} we will construct for every $[n]$ a bijection from total cyclic orders on $[n+1]$ to permutations on $[n]$ then show that this bijection maps each $\calP_w$ to $\calS_{i(w)}$. This bijection is constructed by induction on $n$. We grow the total cyclic order by adding the numbers $1,2,\ldots,n+1$ one after the other. Simultaneously we grow the dot diagram, by adding the $i$-th dot at the same time as we add the number $i+1$ to the cyclic order. At the beginning of the process, the cycle only contains the numbers $1$ and $2$ and the dot diagram only contains a single dot. Assume we already have $j$ elements in the cycle with $j\geq2$ and $j-1$ dots in the diagram. We divide the space in which the dot diagram lives into $j$ regions separated by horizontal boundaries, with one horizontal boundary at the height of each currently present dot. The slices are numbered from $0$ to $j-1$, either from bottom to top if $j$ is odd or from top to bottom if $j$ is even. See Figure~\ref{fig:slices} for an example.

\begin{figure}[htbp]
\centering
\subfloat[Slicing the plane into four numbered regions when $j=4$.]{\label{fig:slices3}\includegraphics[height=1.5in]{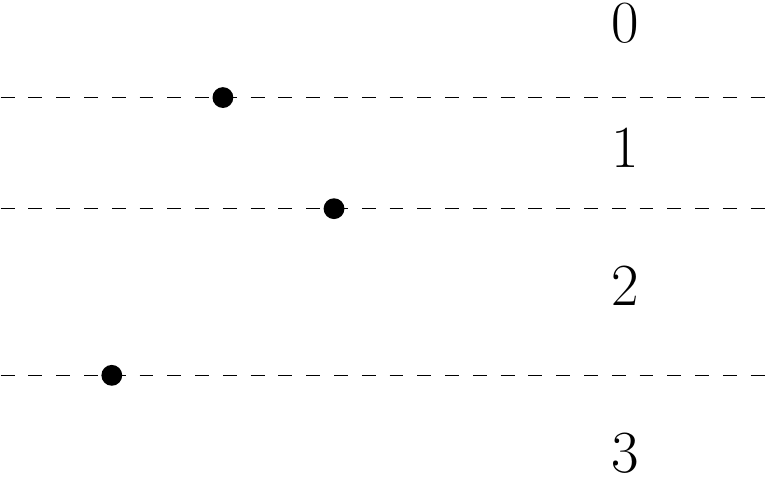}}
\hspace{\stretch{1}}
\subfloat[Slicing the plane into five numbered regions when $j=5$.]{\label{fig:slices4}\includegraphics[height=1.5in]{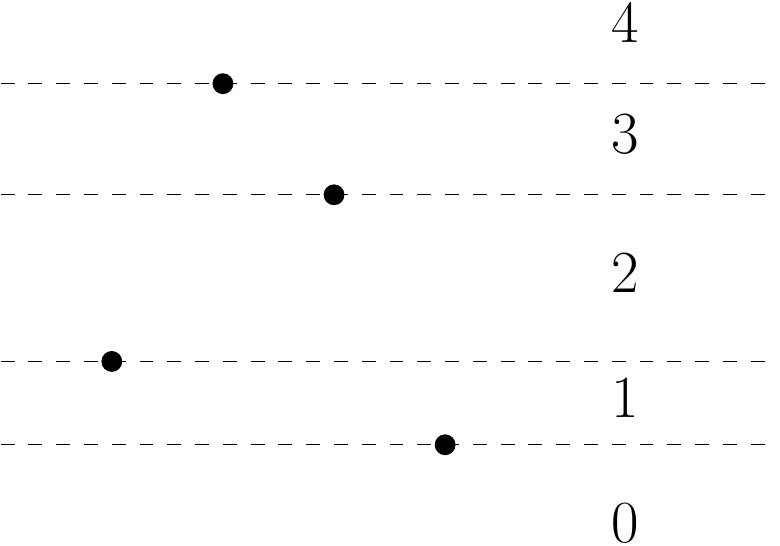}}
\caption{Slicing the plane and numbering the corresponding regions when $j$ is even or odd.}
\label{fig:slices}
\end{figure}

Let $k$ denote the number of elements which are on the arc from $j$ to $j+1$ at the time when the cycle contains $j+1$ elements. We add the $j$-th dot in the region number $k$ of the dot diagram, to the right of all the dots already present. See Figure~\ref{fig:dotaddition} for an example.

\begin{figure}[htbp]
\centering
\subfloat[Adding the number $5$ to a cycle already containing $4$ elements.]{\label{fig:adding5}\includegraphics[height=2in]{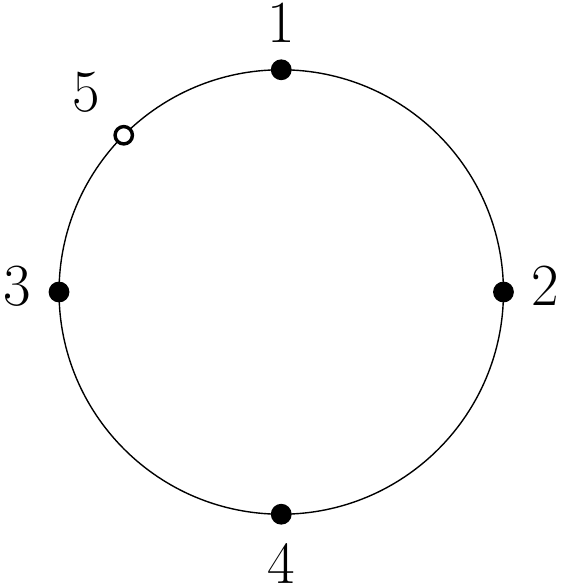}}
\hspace{\stretch{1}}
\subfloat[The dot diagram corresponds to the permutation $321$ before the addition of the fourth dot (represented here by the hollow dot). After adding the fourth dot, it corresponds to the permutation $4312$.]{\label{fig:addingdot}\includegraphics[height=1.2in]{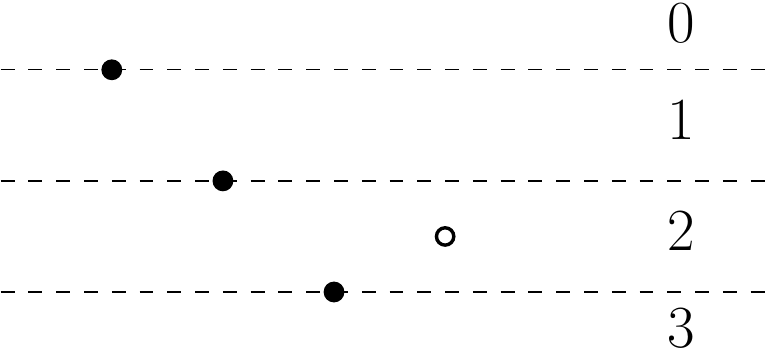}}
\caption{In this example, $j=4$ and $k=2$, because the numbers $2$ and $1$ are on the arc between $4$ and the newly added number $5$. Thus the newly added dot in the dot diagram is in the region number $2$.}
\label{fig:dotaddition}
\end{figure}

In the remainder of this section, we first define in Subsection~\ref{subsec:cyclicdescentclass} a one-to-one correspondence between each $\calP_w$ and a certain class of cyclic permutations on $[n]$, which we call the cyclic descent class $\calC_w$, then we formalize in Subsection~\ref{subsec:bijection} the map described above and finally in Subsections~\ref{subsec:lemma1} and~\ref{subsec:lemma2} we show that this map is a bijection from each $\calP_w$ and $\calS_{i(w)}$.

\subsection{Cyclic descent classes}
\label{subsec:cyclicdescentclass}

There is a one-to-one correspondence between permutations $\sigma\in\calS_n$ and total orders $<_{\sigma}$ on $[n]$, by setting $i <_{\sigma} j$ if and only if $\sigma(i)<\sigma(j)$. As observed in Subsection~\ref{subsec:descentclass}, this reduces the problem of enumerating certain linear extensions of posets on $[n]$ to enumerating descent class of permutations in $\calS_n$.

By analogy with the linear setting, we define a one-to-one correspondence between cyclic permutations $\pi\in\calC_n$ and total cyclic orders $Z$ on $[n]$. For any $n\geq3$, let $\calZ_n$ denote the set of all total cyclic orders on $[n]$. Define the map
\[
\zeta:\bigsqcup_{n\geq3} \calC_n \rightarrow \bigsqcup_{n\geq3} \calZ_n
\]
where for any $\pi\in\calC_n$, $\zeta(\pi)$ is defined as follows: $({i_1},{i_2},{i_3})\in\zeta(\pi)$ if and only if $i_2=\pi^k(i_1)$ and $i_3=\pi^{\ell}(i_1)$ for some $1 \leq  k< \ell \leq n-1$. In words, for the total cyclic order $\zeta(\pi)$, ${\pi(i)}$ is the next element after $i$ when turning in the positive direction. For example, if $n=4$ and $\pi=[1,3,4,2]$, then $\zeta(\pi)$ is the total cyclic order depicted in Figure~\ref{fig:cyclicorder4}. The map $\zeta$ is clearly a bijection from $\calC_n$ to $\calZ_n$ for any $n\geq3$.

Continuing the analogy with the linear setting, we define cyclic descent classes.
\begin{definition}
Fix $n\geq3$. The \emph{cyclic descent pattern of a cyclic permutation} $\pi\in\calC_n$ is the word $w=\epsilon_1\ldots\epsilon_{n-2}\in\left\{+,-\right\}^{n-2}$ such that for all $1\leq i\leq n-2$
\[
\epsilon_i=\begin{cases}
+ \text{ if } (i,{i+1},{i+2}) \in \zeta(\pi), \\
- \text{ if } ({i+2},{i+1},i) \in \zeta(\pi).
\end{cases}
\]
The \emph{cyclic descent class} $\calC_w$ is defined to be the set of all cyclic permutations with cyclic descent pattern $w$.
\end{definition}

Clearly $\zeta$ maps $\calC_w$ to $\calP_w$. Thus, to prove Theorem~\ref{thm:cyclicshape}, it suffices to show that for any $n\geq1$ and $w\in\left\{+,-\right\}^n$, we have $\# \calC_w = \#\calS_{i(w)}$.

\subsection{Construction of a bijection}
\label{subsec:bijection}

Before describing the construction of $F$, we need some preliminary remarks about notations for permutations. For any $n\geq3$, we define the map $\partial:\calC_n\rightarrow\calC_{n-1}$ as follows: for any $\pi\in\calC_n$, $\partial(\pi)$ is the cyclic permutation such that for any $1 \leq i \leq n-1$,
\[
\partial(\pi)(i)=
\begin{cases}
\pi(i) &\text{ if } \pi(i) \neq n, \\
\pi(n) &\text{ if } \pi(i)=n.
\end{cases}
\]
In words, $\partial$ deletes the largest number from the cycle. For example, $\partial([1,5,3,4,2])=[1,3,4,2]$.
For any $n\geq2$, we define the map $d:\calS_n\rightarrow\calS_{n-1}$ as follows: for any $\sigma\in\calS_n$, $d(\sigma)$ is the permutation such that for any $1 \leq i \leq n-1$,
\[
d(\sigma)(i)=
\begin{cases}
\sigma(i) &\text{ if } \sigma(i)<\sigma(n), \\
\sigma(i)-1 &\text{ if } \sigma(i)>\sigma(n).
\end{cases}
\]
For example, using the one-line notation for permutations, we have $d(1 4 2 5 3)=1 3 2 4$.

Note that
\[
D:
\begin{array}{ccc}
 \calS_n & \longrightarrow & \calS_{n-1}\times\left\{1,\ldots,n\right\} \\
 && \\
 \sigma & \longmapsto & (d(\sigma),\sigma(n))
\end{array}
\]
is a bijection.

We will define the bijection $F$ by induction on the size of the cyclic permutation $\pi$. There is a unique cyclic permutation $\pi_0\in\calC_2$. We set $F(\pi_0)$ to be the unique permutation in $\calS_1$. Fix $n \geq 2$. Assume we have defined how $F$ acts on cyclic permutations in $\calC_n$. Fix now $\pi\in\calC_{n+1}$ and write $\tilde{\sigma}=F(\partial(\pi))$. Set
\begin{equation}
\beta:=c_{\zeta(\pi)}(n,{n+1})
\end{equation}
and write
\begin{equation}
\label{eq:defalpha}
\alpha=
\begin{cases}
n-\beta \text{ if } n \text{ is even}, \\
1+\beta \text{ if } n \text{ is odd}.
\end{cases}
\end{equation}
Then we set
\begin{equation}
F(\pi)=D^{-1}(\tilde{\sigma},\alpha).
\end{equation}
For example, $F([1,5,3,4,2])=4 3 1 2$.

Theorem~\ref{thm:cyclicshape} immediately follows from the next two lemmas.

\begin{lemma}
\label{lem:Phibijection}
The map $F$ induces for all $n\geq1$ a bijection from $\calC_{n+1}$ to $\calS_n$.
\end{lemma}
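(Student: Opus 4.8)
The plan is to prove that $F$ is a bijection from $\calC_{n+1}$ to $\calS_n$ by induction on $n$, exhibiting an explicit inverse. The base case $n=1$ is immediate since $|\calC_2|=|\calS_1|=1$. For the inductive step, I would assume that $F$ restricts to a bijection $\calC_n\to\calS_{n-1}$ and show the same for $\calC_{n+1}\to\calS_n$. The key structural observation is that the two maps $\partial:\calC_{n+1}\to\calC_n$ and $D:\calS_n\to\calS_{n-1}\times\{1,\ldots,n\}$ fit together compatibly with $F$: by definition $F(\pi)=D^{-1}(F(\partial(\pi)),\alpha(\pi))$, so the diagram relating $\partial$, $F$, and $D$ commutes in the sense that $D\circ F = (F\circ\partial)\times \alpha$, where $\alpha(\pi)$ is the second coordinate read off from~(\ref{eq:defalpha}).

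First I would verify that $F$ is well-defined, i.e. that $\alpha(\pi)\in\{1,\ldots,n\}$ so that $D^{-1}(\tilde\sigma,\alpha)$ makes sense. This amounts to checking that $\beta=c_{\zeta(\pi)}(n,n+1)$ ranges over $\{0,\ldots,n-1\}$, which holds because the arc from $n$ to $n+1$ can contain any number of the remaining $n-1$ elements; then $\alpha=n-\beta$ or $\alpha=1+\beta$ lies in $\{1,\ldots,n\}$ in either parity. Since $D$ is a bijection, $D^{-1}$ is defined on all of $\calS_{n-1}\times\{1,\ldots,n\}$, so $F(\pi)\in\calS_n$.

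The heart of the argument is constructing the two-sided inverse. Given $\sigma\in\calS_n$, set $(\tilde\sigma,\alpha)=D(\sigma)$; by the inductive hypothesis there is a unique $\pi'\in\calC_n$ with $F(\pi')=\tilde\sigma$. I then need to recover a unique $\pi\in\calC_{n+1}$ with $\partial(\pi)=\pi'$ and $\alpha(\pi)=\alpha$. The map $\partial$ deletes the element $n+1$ from the cycle, so its fibers over $\pi'$ are parametrized precisely by the choice of where to reinsert $n+1$: there are exactly $n$ possible slots (the $n$ arcs of the cycle $\pi'$ on $[n]$), and inserting $n+1$ into the slot immediately following $n$ with $\beta$ elements on the arc from $n$ to $n+1$ is in bijection with the choice $\beta\in\{0,\ldots,n-1\}$. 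Inverting~(\ref{eq:defalpha}) recovers $\beta$ from $\alpha$ uniquely for each parity of $n$, which pins down the insertion slot and hence $\pi$. Checking that $\partial(\pi)=\pi'$ after this reinsertion (so that $F(\partial(\pi))=\tilde\sigma$ indeed holds) and that $\alpha(\pi)=\alpha$ confirms $F(\pi)=\sigma$, giving surjectivity and injectivity simultaneously.

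The main obstacle I anticipate is the bookkeeping around $\partial$ and reinsertion: I must confirm that the fiber $\partial^{-1}(\pi')$ is canonically indexed by the content $\beta=c_{\zeta(\pi)}(n,n+1)$ of the arc from $n$ to $n+1$, i.e. that specifying how many of the $n-1$ elements $\{1,\ldots,n-1\}$ lie on the arc from $n$ to $n+1$ determines the position of $n+1$ in the cycle uniquely (and conversely). This requires translating between the cyclic-permutation picture ($\pi$ and $\partial$) and the cyclic-order picture ($\zeta(\pi)$ and the content $c$), using the bijection $\zeta$ and the fact that $\zeta$ commutes appropriately with $\partial$ — namely that deleting $n+1$ from the cycle corresponds to deleting $n+1$ from the circle without disturbing the relative order of the remaining elements. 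Once this correspondence is nailed down, the parity-dependent formula~(\ref{eq:defalpha}) is just a relabeling ensuring $\alpha$ sweeps $\{1,\ldots,n\}$ bijectively, and the inductive step closes.
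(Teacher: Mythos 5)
Your proposal is correct and follows essentially the same route as the paper: induction on $n$, with the preimage of $\sigma$ constructed by reinserting $n+1$ into the cycle of $\pi'$ at the unique slot making $c_{\zeta(\pi)}(n,{n+1})=\beta$, where $\beta$ is recovered from $\alpha=\sigma(n)$ by inverting the parity-dependent formula~\eqref{eq:defalpha}. The only cosmetic difference is that the paper proves surjectivity and then invokes $\#\calC_{n+1}=\#\calS_n=n!$ to conclude bijectivity, whereas you verify a two-sided inverse directly; both hinge on the same fiber-of-$\partial$ bookkeeping.
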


\begin{lemma}
\label{lem:Phiactiononclasses}
For any $n\geq1$ and $w\in\left\{+,-\right\}^n$, $F(\calC_w)\subset\calS_{i(w)}$.
\end{lemma}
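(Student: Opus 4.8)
The plan is to prove Lemma~\ref{lem:Phiactiononclasses} by induction on the size of the cyclic permutation, reading off the behaviour of $F$ from its recursive definition. Since $\zeta$ identifies $\calC_w$ with $\calP_w$ and the involution $i$ acts letter by letter, it suffices to show that for every $\pi$ the descent pattern of $F(\pi)$ is obtained from the cyclic descent pattern of $\pi$ by flipping the signs at even positions. The base case $\pi\in\calC_2$ (empty patterns) is immediate, so I would fix $\pi\in\calC_{n+1}$ with cyclic descent pattern $w=\epsilon_1\cdots\epsilon_{n-1}$ and adopt the notation of the definition of $F$: set $\tilde{\sigma}=F(\partial(\pi))$, $\beta=c_{\zeta(\pi)}(n,n+1)$, let $\alpha$ be as in~\eqref{eq:defalpha}, and $\sigma:=F(\pi)=D^{-1}(\tilde{\sigma},\alpha)$, so that $\sigma(n)=\alpha$.

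First I would dispatch all the letters supplied by the induction hypothesis. Deleting the largest element $n+1$ leaves the relative cyclic order of every triple $(i,i+1,i+2)$ with $i+2\le n$ unchanged, so the cyclic descent pattern of $\partial(\pi)$ is exactly the prefix $\epsilon_1\cdots\epsilon_{n-2}$ of $w$; by induction the descent pattern of $\tilde{\sigma}$ is $i(\epsilon_1\cdots\epsilon_{n-2})$. Because $\sigma=D^{-1}(\tilde{\sigma},\alpha)$ is obtained from $\tilde{\sigma}$ by appending the value $\alpha$ and shifting up the values $\ge\alpha$, the map $\tilde{\sigma}(\ell)\mapsto\sigma(\ell)$ is order-preserving on $\ell\in\{1,\dots,n-1\}$. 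Hence the relative order of $\sigma(1),\dots,\sigma(n-1)$ matches that of $\tilde{\sigma}(1),\dots,\tilde{\sigma}(n-1)$, so the first $n-2$ letters of the descent pattern of $\sigma$ coincide with the descent pattern of $\tilde{\sigma}$, which is precisely the length-$(n-2)$ prefix of $i(w)$.

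The main obstacle is the last letter, at position $n-1$, which compares $\sigma(n)=\alpha$ with $\sigma(n-1)$: I must show it equals $i(w)_{n-1}$, namely $\epsilon_{n-1}$ if $n$ is even and $-\epsilon_{n-1}$ if $n$ is odd. The insertion rule for $D^{-1}$ gives a clean reduction. Writing $\alpha':=\tilde{\sigma}(n-1)=F(\partial(\pi))(n-1)$ for the last value produced at the previous stage, one has $\sigma(n-1)=\alpha'+\ind{\alpha'\ge\alpha}$, so $\sigma(n)>\sigma(n-1)$ (an ascent) if and only if $\alpha'<\alpha$. Thus the last letter is $+$ exactly when $\alpha'<\alpha$, and it remains to compare the two explicit values $\alpha$ and $\alpha'$ obtained by applying~\eqref{eq:defalpha} to $\pi$ and to $\partial(\pi)$ respectively.

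This final comparison is where the real work lies, and I expect it to be pure bookkeeping rather than a conceptual difficulty. The sign $\epsilon_{n-1}$ records whether the cyclic order of $n-1,n,n+1$ is $(n-1,n,n+1)$ or $(n-1,n+1,n)$, and the key arc-counting identity is that $\beta':=c_{\zeta(\partial(\pi))}(n-1,n)$ equals $c_{\zeta(\pi)}(n-1,n)-\ind{\epsilon_{n-1}=-}$, since $n+1$ lies strictly on the arc from $n-1$ to $n$ exactly when $\epsilon_{n-1}=-$. Substituting the values from~\eqref{eq:defalpha} reduces the inequality $\alpha'<\alpha$ to a comparison of $\beta+\beta'$ with $n-1$, whose \emph{direction} flips with the parity of $n$; meanwhile, counting the $n-2$ elements other than $n-1,n,n+1$ among the three arcs these delimit shows that $\beta+\beta'<n-1$ holds automatically when $\epsilon_{n-1}=+$ and fails automatically (i.e. $\beta+\beta'\ge n-1$) when $\epsilon_{n-1}=-$. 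The parity-dependent direction of the comparison, combined with this $\epsilon_{n-1}$-governed dichotomy, is exactly what produces the sign flip at even positions, and matching the four resulting cases against $i(w)_{n-1}$ closes the induction.
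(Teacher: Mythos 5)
Your proposal is correct and follows essentially the same route as the paper's proof: induction on the size of the cycle, reduction to the last letter of the descent pattern via the order-preserving insertion $D^{-1}$, and a comparison of the two arc-contents (your $\beta,\beta'$ are the paper's $\beta_2,\beta_1$ after reindexing) against $n-1$, with the direction of the inequality flipping with the parity of $n$ and the dichotomy governed by $\epsilon_{n-1}$. The only cosmetic difference is that you compare $\alpha$ with $\alpha'$ directly, whereas the paper writes out $\sigma(n)$ and $\sigma(n+1)$ explicitly before comparing them.
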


Corollary~\ref{cor:Entringernumbers} about Entringer numbers follows from formula~\eqref{eq:defalpha} defining $\alpha$.

\subsection{Proof of Lemma~\ref{lem:Phibijection}.}
\label{subsec:lemma1}

The sets $\calC_{n+1}$ and $\calS_n$ both have cardinality $n!$. We will show by induction on $n\geq1$ that $F:\calC_{n+1}\rightarrow\calS_n$ is surjective. It is clear for $n=1$.

Pick $\sigma\in\calS_n$ with $n\geq 2$. By induction hypothesis, we can find $\tilde{\pi}\in\calC_n$ such that $F(\tilde{\pi})=d(\sigma)$. We set $\beta$ to be $n-\sigma(n)$ (resp. $\sigma(n)-1$) if $n$ is even (resp. odd). For any $1\leq i \leq n+1$, define
\[
\pi(i)=
\begin{cases}
n+1 &\text{ if } i=\tilde{\pi}^{\beta}(n), \\
\tilde{\pi}^{\beta+1}(n) &\text{ if } i=n+1, \\
\tilde{\pi}(i) &\text{ otherwise}.
\end{cases}
\]
It is not hard to check that $\pi\in\calC_{n+1}$. In words, $\pi$ is obtained from $\tilde{\pi}$ by adding the element $n+1$ to the cycle in such a way that $c_{\zeta(\pi)}(n,{n+1})=\beta$. Thus by construction of $F$ we have $F(\pi)=\sigma$, which concludes the proof of Lemma~\ref{lem:Phibijection}.

\subsection{Proof of Lemma~\ref{lem:Phiactiononclasses}.}
\label{subsec:lemma2}

We will show by induction on $n\geq1$ that for any $w\in\left\{+,-\right\}^n$ and $\pi\in\calC_w$, $F(\pi)\in\calS_{i(w)}$. Since $F([1,2,3])=(1 2)$ and $F([3,2,1])=(2 1)$, it is true for $n=1$.
Define the map $\delta:\left\{+,-\right\}^n\rightarrow\left\{+,-\right\}^{n-1}$ by $\delta(\epsilon_1\cdots \epsilon_n):=\epsilon_1\cdots \epsilon_{n-1}$.
Pick $n\geq2$, $w\in\left\{+,-\right\}^n$ and $\pi\in\calC_w$. Set $\sigma=F(\pi)$, $\tilde{\pi}=\partial(\pi)$ and $\tilde{\sigma}=F(\tilde{\pi})$. Then $\tilde{\pi}\in\calC_{\delta(w)}$ and by induction hypothesis $\tilde{\sigma}\in\calS_{i(\delta(w))}$. Observing that the maps $i$ and $\delta$ commute and that $\tilde{\sigma}=d(\sigma)$, we get that $d(\sigma)\in\calS_{\delta(i(w))}$. For any $1 \leq i \leq n-1$, we have $\sigma(i)<\sigma(i+1)$ if and only if $d(\sigma)(i)<d(\sigma)(i+1)$, so in order to conclude that $\sigma\in\calS_{i(w)}$, it suffices to show that $\sigma(n)<\sigma(n+1)$ (resp. $\sigma(n)>\sigma(n+1)$) if the last letter of $i(w)$ is $+$ (resp. $-$). Set
\begin{align}
\beta_1 &= c_{\zeta(\tilde{\pi})}(n,{n+1}) \\
\beta_2 &= c_{\zeta(\pi)}({n+1},{n+2}).
\end{align}
Observe that
\begin{equation}
\beta_1=\#\left\{1\leq i \leq n-1 | (n,i,{n+1})\in \zeta(\pi) \right\}.
\end{equation}
We now need to distinguish according to the parity of $n$ and the last letter $\epsilon_n$ of $w$.

Assume first $n$ is odd. Observe that $\sigma(n+1)=n+1-\beta_2$ and
\begin{equation}
\sigma(n)=
\begin{cases}
\beta_1 +1 &\text{ if } \beta_1 +1<\sigma(n+1), \\
\beta_1 +2 &\text{ if } \beta_1 +1 \geq\sigma(n+1).
\end{cases}
\end{equation}
In particular, $\sigma(n)<\sigma(n+1)$ if and only if $\beta_1+\beta_2 \leq n-1$.
If $\epsilon_n=+$ (resp. $\epsilon_n=-$), then $(n,{n+1},{n+2})\in \zeta(\pi)$ (resp. $({n+2},{n+1},n)\in \zeta(\pi)$) thus $\beta_1+\beta_2\leq n-1$ (resp. $\beta_1+\beta_2\geq n$) hence $\sigma(n)<\sigma(n+1)$ (resp. $\sigma(n)>\sigma(n+1)$), which concludes the proof in this case since the last letter of $i(w)$ is $\epsilon_n$.

Assume now that $n$ is even. This time $\sigma(n+1)=1+\beta_2$ and
\begin{equation}
\sigma(n)=
\begin{cases}
n-\beta_1 &\text{ if } n-\beta_1<\sigma(n+1), \\
n-\beta_1+1 &\text{ if } n-\beta_1 \geq \sigma(n+1).
\end{cases}
\end{equation}
In particular, $\sigma(n)<\sigma(n+1)$ if and only if $\beta_1+\beta_2 \geq n$. Just as in the case of $n$ odd, we find that if $\epsilon_n=+$ (resp. $\epsilon_n=-$), then $\sigma(n)>\sigma(n+1)$ (resp. $\sigma(n)<\sigma(n+1)$), which concludes the proof in this case since the last letter of $i(w)$ is $-\epsilon_n$.

\section{Proof of the recursion formulas for \texorpdfstring{$\calQ_w^{\eta}$}{Q,w,eta} and \texorpdfstring{$\calR_w^{(\alpha)}$}{R,w,alpha}}
\label{sec:proof2}

We first show how to derive linear recursion relations for the triangles and tetrahedra of numbers, then how to translate them to the language of generating functions.

\subsection{Recursion relations for triangles and tetrahedra of numbers}

The proof of the linear recursion formulas for $\calQ_w^{\eta}$  (resp. for $\calR_w^{(\alpha)}$) goes along the following lines: take a total cyclic order $Z$ on $[n+1]$ where the elements $n$, ${n+1}$ and $1$ (resp. $n,{n+1},1$ and $2$) form a prescribed chain in $Z$, obtain a total cyclic order $\tilde{Z}$ on $[n]$ by deleting the element ${n+1}$ and look at the possible chains formed in $\tilde{Z}$ by ${n-1}$, $n$ and $1$ (resp. ${n-1},n,1$ and $2$).

Define the map
\begin{equation}
\bar{\partial}:
\begin{array}{ccc}
\bigsqcup_{n\geq4} \calZ_n & \longrightarrow & \bigsqcup_{n\geq3} \calZ_n \\
&&\\
 Z & \longmapsto & \zeta \circ \partial \circ \zeta^{-1}(Z)
\end{array}.
\end{equation}
In words, for any $n\geq3$ and $Z\in\calZ_{n+1}$, $\bar{\partial}(Z)$ is the total order on $[n]$ obtained by deleting from $Z$ all the triples involving ${n+1}$.

For any $n\geq3$, $w\in\left\{+,-\right\}^n$ and $i,j,k\geq0$ with $i+j+k=n-3$, set
\begin{align}
\calQ_{w,i,j,k}^+ &:=\left\{ Z\in\calQ_w^+ | \tilde{c}_Z({n-1},n,1)=(i,j,k) \right\} \\
\calQ_{w,i,j,k}^- &:= \left\{ Z\in\calQ_w^- | \tilde{c}_Z(n,{n-1},1)=(i,j,k) \right\}.
\end{align}

\begin{lemma}
\label{lem:removing}
For any $n\geq1$, $w\in\left\{+,-\right\}^n$ and $i,j,k\geq0$ such that $i+j+k=n$, the restriction of the map $\bar{\partial}$ to $\calQ_{w+,i,j,k}^+$ is a bijection from $\calQ_{w+,i,j,k}^+$ to the set
\[
\tilde{\calQ}_{w,i,j,k}:=\bigsqcup_{j'=0}^{j-1} \calQ_{w,i+j-1-j',j',k}^- \sqcup \bigsqcup_{k'=0}^{k-1} \calQ_{w,k-1-k',i+j,k'}^+.
\]
\end{lemma}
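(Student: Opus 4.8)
The plan is to realise $\bar{\partial}$ concretely as deletion of the largest element $n+3$ from the circular arrangement, and then to track carefully where the element $n+1$ sits among the three arcs cut out by the reference triple $(n+2,n+3,1)$. Fix $Z\in\calQ_{w+,i,j,k}^+$, a total cyclic order on $[n+3]$. Reading in the positive direction starting from $n+2$, one meets $i$ elements, then $n+3$, then $j$ elements, then $1$, then $k$ elements, then returns to $n+2$: this is exactly the data $\tilde{c}_Z(n+2,n+3,1)=(i,j,k)$ together with $(n+2,n+3,1)\in Z$ coming from $Z\in\calQ_{w+}^+$. Since the defining triples of $\calP_w$, namely $(m,m+1,m+2)$ for $1\leq m\leq n$, involve only elements $\leq n+2$, they are untouched by the deletion of $n+3$, so $\tilde{Z}:=\bar{\partial}(Z)\in\calP_w$ automatically; the entire content of the lemma concerns the position of $n+1$ and the recomputation of the multi-content after deletion.

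First I would exploit that the last letter of $w+$ is $+$, so $Z\in\calP_{w+}$ forces $(n+1,n+2,n+3)\in Z$. This immediately rules out $n+1$ lying on the arc from $n+2$ to $n+3$ (the ``$i$-arc''): were it there, reading positively from $n+1$ one would meet $n+3$ before $n+2$, contradicting the constraint. Hence $n+1$ lies strictly between $n+3$ and $1$ (the $j$-arc) or strictly between $1$ and $n+2$ (the $k$-arc), giving the two families in the definition of $\tilde{\calQ}_{w,i,j,k}$. In the first case, letting $j'$ be the number of elements positively between $n+1$ and $1$, deleting $n+3$ merges the $i$-arc with the part of the $j$-arc preceding $n+1$, and a direct reading shows $(n+2,n+1,1)$ forms a $\calQ_w^-$-configuration with $\tilde{c}_{\tilde{Z}}(n+2,n+1,1)=(i+j-1-j',\,j',\,k)$, so $\tilde{Z}\in\calQ_{w,i+j-1-j',j',k}^-$ as $j'$ runs over $0,\ldots,j-1$. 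In the second case, letting $k'$ be the number of elements positively between $1$ and $n+1$, the same computation yields a $\calQ_w^+$-configuration with $\tilde{c}_{\tilde{Z}}(n+1,n+2,1)=(k-1-k',\,i+j,\,k')$, so $\tilde{Z}\in\calQ_{w,k-1-k',i+j,k'}^+$ as $k'$ runs over $0,\ldots,k-1$. These exhaust $\tilde{\calQ}_{w,i,j,k}$, and the two families are disjoint since they are distinguished by the superscript $\pm$ and, within each superscript, by the content.

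For bijectivity I would give the single uniform reinsertion rule inverting $\bar{\partial}$: given any $\tilde{Z}$ in one of the target pieces, insert $n+3$ immediately after the $i$-th element encountered positively from $n+2$. I would check that this insertion always falls inside the relevant merged arc (because $i\leq i+j-1-j'$ in the first family and $i\leq i+j$ in the second), that it restores $\tilde{c}_Z(n+2,n+3,1)=(i,j,k)$ and the constraint $(n+1,n+2,n+3)\in Z$, and that it recovers the original $Z$; this produces a genuine two-sided inverse to $\bar{\partial}$ on $\calQ_{w+,i,j,k}^+$. The main obstacle is purely the bookkeeping: choosing the counting conventions for $j'$ and $k'$ so that the recomputed multi-contents match the indices appearing in $\tilde{\calQ}_{w,i,j,k}$ exactly, and confirming that deletion and reinsertion are truly inverse at the level of the full cyclic order rather than merely on the coarse arc-size data.
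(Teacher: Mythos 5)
Your proof is correct and takes essentially the same route as the paper's: the same case analysis on which of the two admissible arcs contains $n+1$ (equivalently, whether $\bar{\partial}Z$ lands in a $\calQ_w^-$ or a $\calQ_w^+$ class), the same recomputation of the multi-contents, and the same explicit inverse obtained by reinserting the largest element so that the content of the arc from the second-largest element to it equals $i$. The only cosmetic differences are that the paper phrases the case split via transitivity of the cyclic order rather than via the circular-arrangement picture, and works on the ground set $[n+1]$ (with $w$ of length $n-2$) rather than your $[n+3]$.
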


\begin{proof}
Fix $n\geq3$, $w\in\left\{+,-\right\}^{n-2}$ and $Z\in\calQ_{w+,i,j,k}^+$. We first show that $\bar{\partial} Z$ lies in $\tilde{\calQ}_{w,i,j,k}$. Since $\bar{\partial} Z$ lies in $\calZ_n$ and the relative order of any triple $i,{i+1},{i+2}$ with $1\leq i\leq n-2$ is prescribed by the word $w$, $\bar{\partial} Z$ must lie either in some $\calQ_{w,i',j',k'}^-$ or in some $\calQ_{w,i',j',k'}^+$. Assume first that $\bar{\partial} Z$ lies in some $\calQ_{w,i',j',k'}^-$ (see Figure~\ref{fig:finduction}).

\begin{figure}[htpb]
\centering
\includegraphics[height=2.5in]{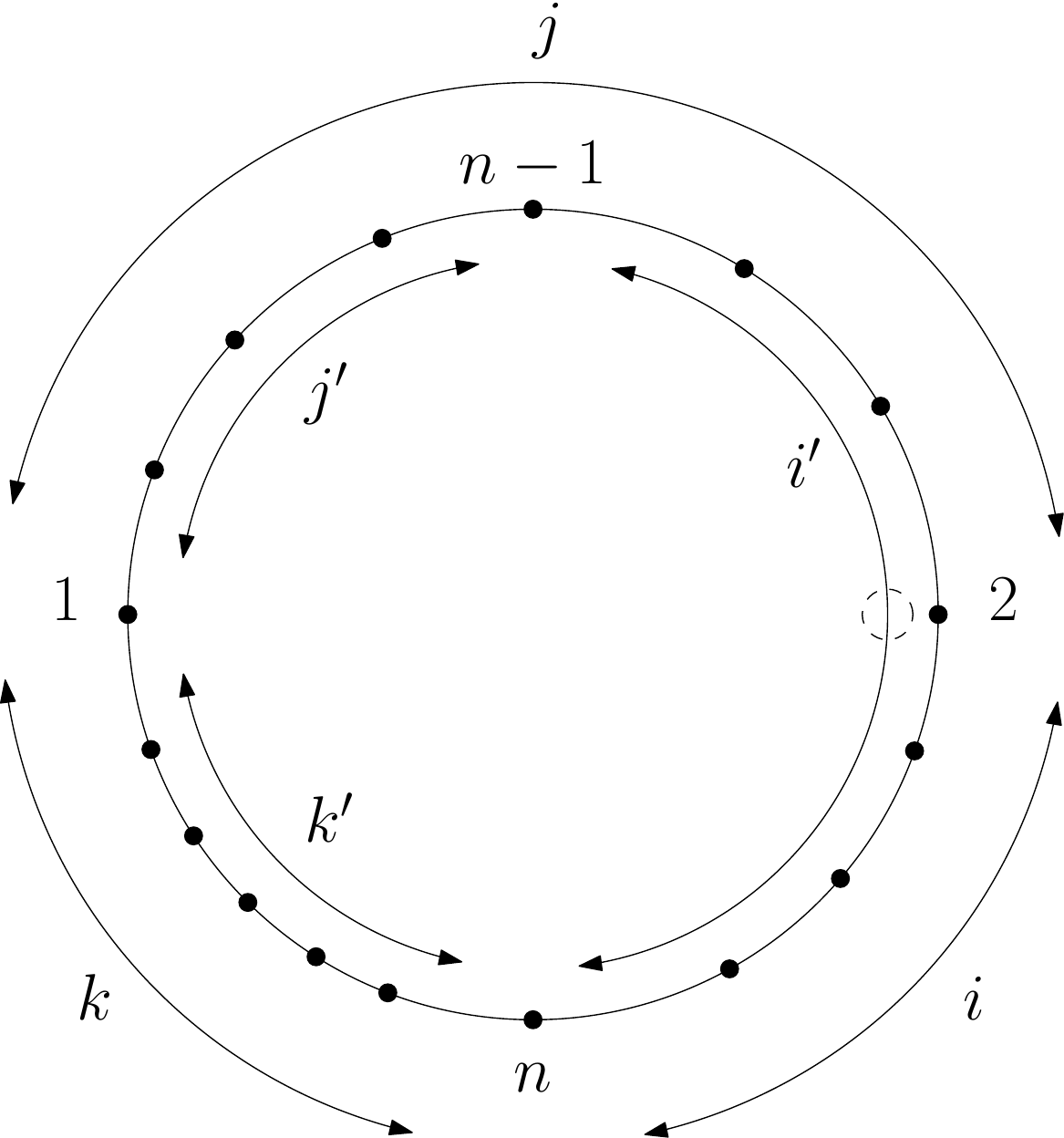}
\caption{A cyclic order $Z$ in some $\calQ_{w+,i,j,k}^+$ with $w\in\left\{+,-\right\}^{n-2}$ and such that $\bar{\partial}Z\in\calQ_{w,i',j',k'}^-$. The small dashed circle indicates that ${n+1}$ is not counted by $i'$.}
\label{fig:finduction}
\end{figure}

Then $({n-1},n,{n+1})\in Z$ and $({n-1},1,n)\in\bar{\partial} Z\subset Z$ thus by transitivity, it follows that $({n-1},1,n,{n+1})$ forms a chain in $Z$. From this we deduce the following:
\begin{gather*}
k'=c_{\bar{\partial}Z}(1,n)=c_Z(1,n)=k ; \\
j'=c_{\bar{\partial}Z}({n-1},1)=c_Z({n-1},1) < c_Z({n+1},1)=j ;  \\
i'+j'=c_{\bar{\partial}Z}(n,{n-1})+c_{\bar{\partial}Z}({n-1},1)=c_Z(n,{n-1})-1+c_Z({n-1},1) \\
=c_Z(n,{n+1})+c_Z({n+1},1)-1=i+j-1.
\end{gather*}
Thus
\[
\bar{\partial}Z\in\bigsqcup_{j'=0}^{j-1} \calQ_{w,i+j-1-j',j',k}^-.
\]
Similarly, in the case when $\bar{\partial} Z$ lies in some $\calQ_{w,i',j',k'}^+$, we obtain that
\[
\bar{\partial}Z\in\bigsqcup_{k'=0}^{k-1} \calQ_{w,k-1-k',i+j,k'}^+.
\]

For any $\tilde{Z}\in \tilde{\calQ}_{w,i,j,k}$, let $\iota(\tilde{Z})$ be the total order on $[n+1]$ obtained from $\tilde{Z}$ by adding ${n+1}$ on the circle in such a way that
\[c_{\iota(\tilde{Z})}(n,{n+1})=i.
\]
More precisely, writing $\tilde{\pi}=\zeta^{-1}(\tilde{Z})$, set $\iota(\tilde{Z}):=\zeta(\pi)$, where $\pi$ is defined by
\[
\pi(\ell)=
\begin{cases}
n+1 &\text{ if } \ell=\tilde{\pi}^{i}(n), \\
\tilde{\pi}^{i+1}(n) &\text{ if } \ell=n+1, \\
\tilde{\pi}(\ell) &\text{ otherwise}.
\end{cases}
\]
It is not hard to see that $\iota$ is a left- and right-inverse to the restriction of the map $\bar{\partial}$ to $\calQ_{w+,i,j,k}^+$, which is thus a bijection.
\end{proof}

The recurrence relation~\eqref{eq:firstlinearrecurrence} from Theorem~\ref{thm:extendedcoefficients} immediately follows from Lemma~\ref{lem:removing}. The other statements of this Theorem are proved using statements analogous to Lemma~\ref{lem:removing}. For the recurrence relation~\eqref{eq:secondlinearrecurrence}, one should observe that the image under $\bar{\partial}$ of $\calQ_{w+,i,j,k}^-$ must lie in some $\calQ_{w,i',j',k'}^+$, using transitivity: if $Z\in\calQ_{w+,i,j,k}^-$, then $(n,1,{n+1})\in Z$ and $(n,{n+1},{n-1})\in Z$ thus $(n,1,{n-1}) \in Z$ and $(n,1,{n-1}) \in \bar{\partial}Z$. Finally, one obtains similarly linear recursion formulas for the $g_{w,i,j,k,\ell}^{(\alpha)}$.

\subsection{Recursion relations for generating functions}

Before we translate the recursion relations for triangles and tetrahedra of numbers into recursion relations for generating functions, we need to introduce some notation for certain subsets of indices. Fix $m\geq2$ and $1\leq a,b,c\leq m$ such that $b\neq c$. For any $\underline{i}=(i_1,\ldots,i_m)$ an $m$-tuple of nonnegative integers, we define the set $I_{a,b,c}(\underline{i})$ as follows.
 \begin{enumerate}
  \item If $a=b$, then $I_{a,a,c}(\underline{i})$ is the set of all $m$-tuples of nonnegative integers $\underline{i'}=(i'_1,\ldots,i'_m)$ verifying the following conditions:
  \begin{itemize}
   \item $0 \leq i'_a \leq i_a-1$ ;
   \item $i'_c=i_c+i_a-1-i'_a$ ;
   \item $i'_\ell=i_\ell$ if $\ell\notin\left\{a,c\right\}$.
  \end{itemize}
 \item If $a \neq b$, then $I_{a,b,c}(\underline{i})$ is the set of all $m$-tuples of nonnegative integers $\underline{i'}=(i'_1,\ldots,i'_m)$ verifying the following conditions:
  \begin{itemize}
   \item $0 \leq i'_a \leq i_a-1$ ;
   \item $i'_b=i_a-1-i'_a$ ;
   \item $i'_c=i_b+i_c$ ;
   \item $i'_\ell=i_\ell$ if $\ell\notin\left\{a,b,c\right\}$.
  \end{itemize}
 \end{enumerate}
For any $n\geq0$, denote by $J_n$ the set of all $m$-tuples of nonnegative integers $\underline{i}=(i_1,\ldots,i_m)$ such that $i_1 + \cdots + i_m=n$.

In order to translate the recursion formulas for triangular or tetrahedral arrays of numbers into recursion formulas for generating functions, we apply the following lemma:

\begin{lemma}
\label{lem:generatingfunction}
Fix $n\geq0$ and let $(\lambda_{\underline{i'}})_{\underline{i'}\in J_n}$ and $(\mu_{\underline{i}})_{\underline{i}\in J_{n+1}}$ be two collections of numbers indexed by $m$-tuples of nonnegative integers.
Assume that for any $\underline{i}\in J_{n+1}$, we have
\begin{equation}
\mu_{\underline{i}}=\sum_{\underline{i'}\in I_{a,b,c}(\underline{i})} \lambda_{\underline{i'}}
\end{equation}
Then we have
\begin{equation}
 \Phi_{a,b,c}\left(\sum_{\underline{i'}\in J_n}\lambda_{\underline{i'}}\prod_{\ell=1}^m X_{\ell}^{i'_\ell}\right)=\sum_{\underline{i}\in J_{n+1}}\mu_{\underline{i}}\prod_{\ell=1}^m X_{\ell}^{i_\ell}.
\end{equation}
 \end{lemma}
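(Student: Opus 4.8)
The plan is to deduce the identity from the linearity of $\Phi_{a,b,c}$ together with a direct comparison of coefficients. Since $\Phi_{a,b,c}$ is a linear endomorphism of $\Z[X_1,\ldots,X_m]$, applying it to $\sum_{\underline{i'}\in J_n}\lambda_{\underline{i'}}\prod_{\ell=1}^m X_\ell^{i'_\ell}$ yields $\sum_{\underline{i'}\in J_n}\lambda_{\underline{i'}}\,\Phi_{a,b,c}\!\left(\prod_{\ell=1}^m X_\ell^{i'_\ell}\right)$. It therefore suffices to understand, for a single source monomial $X^{\underline{i'}}$, which target monomials $X^{\underline{i}}$ occur in its image and with which multiplicity; this I would read off directly from the defining formula, expanding the factor $\sum_{k=0}^{i'_c}X_b^{i'_c-k}X_c^k$.

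The \emph{key step} is to establish the dual description: for $\underline{i}\in J_{n+1}$, a tuple $\underline{i'}$ belongs to $I_{a,b,c}(\underline{i})$ if and only if the monomial $X^{\underline{i}}$ occurs in $\Phi_{a,b,c}(X^{\underline{i'}})$, and in that case it occurs with coefficient exactly $1$. I would check this by the same case split that defines $I_{a,b,c}$. When $a=b$, the image equals $\left(\prod_{\ell\neq a,c}X_\ell^{i'_\ell}\right)\sum_{k=0}^{i'_c}X_a^{i'_a+1+i'_c-k}X_c^{k}$, so a monomial appearing in it has $i_\ell=i'_\ell$ for $\ell\notin\{a,c\}$, together with $i_c=k$ and $i_a=i'_a+1+i'_c-i_c$; eliminating $k$ recovers exactly the constraints $i'_c=i_c+i_a-1-i'_a$ and $0\leq i'_a\leq i_a-1$ defining $I_{a,a,c}(\underline{i})$. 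When $a,b,c$ are distinct, the factor $X_a^{i'_b+1}$ combines with the $X_a^{i'_a}$ already present, producing monomials with $i_a=i'_a+i'_b+1$, $i_b=i'_c-k$, $i_c=k$ and $i_\ell=i'_\ell$ otherwise; eliminating $k$ yields precisely $i'_b=i_a-1-i'_a$, $i'_c=i_b+i_c$ and $0\leq i'_a\leq i_a-1$, the defining conditions of $I_{a,b,c}(\underline{i})$. In both cases the coefficient is $1$ because $k\mapsto X_b^{i'_c-k}X_c^{k}$ is injective, which is where the hypothesis $b\neq c$ enters.

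Granting this duality, I would conclude by collecting, for a fixed $\underline{i}\in J_{n+1}$, the contributions of all source monomials: only the tuples $\underline{i'}\in I_{a,b,c}(\underline{i})$ contribute the monomial $X^{\underline{i}}$, each with coefficient $\lambda_{\underline{i'}}$, so the coefficient of $X^{\underline{i}}$ in $\Phi_{a,b,c}\!\left(\sum_{\underline{i'}}\lambda_{\underline{i'}}X^{\underline{i'}}\right)$ is $\sum_{\underline{i'}\in I_{a,b,c}(\underline{i})}\lambda_{\underline{i'}}=\mu_{\underline{i}}$, as desired. A brief bookkeeping remark ensures the index ranges are the intended ones: since $\Phi_{a,b,c}$ raises total degree by exactly $1$ (as already noted after its definition), every $\underline{i'}\in I_{a,b,c}(\underline{i})$ with $\underline{i}\in J_{n+1}$ automatically lies in $J_n$. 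The only real obstacle here is notational rather than conceptual, namely keeping the two regimes $a=b$ and $a,b,c$ distinct carefully aligned with the two clauses of the definition of $I_{a,b,c}$ and confirming the multiplicity-one claim, so that the coefficients match exactly rather than up to a combinatorial factor.
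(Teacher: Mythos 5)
Your proof is correct and is precisely the ``straightforward computation'' that the paper leaves to the reader: linearity of $\Phi_{a,b,c}$ plus a monomial-by-monomial verification that, for fixed $\underline{i}\in J_{n+1}$, the tuples $\underline{i'}$ whose image contains $X^{\underline{i}}$ (always with multiplicity one, since $b\neq c$ makes $k\mapsto X_b^{i'_c-k}X_c^k$ injective) are exactly the elements of $I_{a,b,c}(\underline{i})$. One small remark: your case split ($a=b$ versus $a,b,c$ pairwise distinct) silently omits the possibility $a=c\neq b$, which the paper's standing hypotheses nominally permit; no operator with $a=c$ is ever used in the paper, so nothing is affected, but strictly speaking that case would need to be either excluded or checked separately.
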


The proof of Lemma~\ref{lem:generatingfunction} is a straightforward computation.

\section{A conjecture on asymptotic densities}
\label{sec:conjecture}

Observe that for any $n\geq1$ and $w\in\left\{+,-\right\}^n$, we have
\[
\calP_w=\bigsqcup_{\alpha=1}^6 \calR_w^{(\alpha)}.
\]
One may investigate the density of each $\calR_w^{(\alpha)}$ inside $\calP_w$. In particular, in the case when $w=+^n$ (in which case $\#\calP_w$ is an Euler number), denote by
\[
p_n^{(\alpha)}:=\frac{\#\calR_{+^n}^{(\alpha)}}{\#\calP_{+^n}}
\]
the density of each $\calR_{+^n}^{(\alpha)}$ inside $\calP_{+^n}$ for any $1 \leq \alpha \leq 6$. We conjecture the following regarding the asymptotics of $p_n^{(\alpha)}$:

\begin{conjecture}
\label{conj:asymptoticdensities}
For any $1 \leq \alpha \leq 6$, $p_{\infty}^{(\alpha)}:=\lim_{n\rightarrow\infty} p_n^{(\alpha)}$ exists and is given by:
\begin{gather}
p_{\infty}^{(1)}=\frac{1}{\pi} \\
p_{\infty}^{(2)}=p_{\infty}^{(5)}=\frac{1}{2}-\frac{1}{\pi} \\
p_{\infty}^{(3)}=p_{\infty}^{(4)}=\frac{2}{\pi}-\frac{1}{2} \\
p_{\infty}^{(6)}=1-\frac{3}{\pi}
\end{gather}
\end{conjecture}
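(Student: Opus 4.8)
The plan is to recast the conjecture as a statement about the long-time behaviour of the linear recurrence of Theorem~\ref{thm:fullgf} specialised to $w=+^n$, and to extract the limiting densities from the dominant eigenvector of a continuum limit of that recurrence. Writing $\vec{R}_n:=(R_{+^n}^{(1)},\ldots,R_{+^n}^{(6)})$, the ``$+$'' half of Theorem~\ref{thm:fullgf} (the only one used, since $w=+^n$) expresses $\vec{R}_{n+1}=\mathcal{M}(\vec{R}_n)$ for a fixed linear operator $\mathcal{M}$ built from the $\Phi_{a,b,c}$. Evaluating at $X_1=\cdots=X_4=1$ turns each $R_{+^n}^{(\alpha)}(1,1,1,1)$ into $\#\calR_{+^n}^{(\alpha)}$, so that $p_n^{(\alpha)}=R_{+^n}^{(\alpha)}(\mathbf{1})/\sum_\beta R_{+^n}^{(\beta)}(\mathbf{1})$, and the conjecture is exactly the statement that the normalised vector $\vec{R}_n/\|\vec{R}_n\|$ converges, with the six coordinates of the limit equal to the stated constants.

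To analyse $\mathcal{M}$ I would pass to a continuum limit on the $3$-simplex. Rescaling the index $(i,j,k,\ell)$ with $i+j+k+\ell=n-2$ by $n$, each tetrahedral array $g^{(\alpha)}_{+^n,i,j,k,\ell}$ should, after normalisation, converge to a density $\rho^{(\alpha)}(x,y,z,t)$ on $\{x+y+z+t=1,\ x,y,z,t\geq0\}$; under this rescaling each operator $\Phi_{a,b,c}$ degenerates to an explicit integral operator, since the inner sum $\sum_{k=0}^{i_c}$ becomes an integral over one barycentric coordinate and the shift $i_b\mapsto i_b+1$ becomes the identity. The recurrence $\vec{R}_{n+1}=\mathcal{M}(\vec{R}_n)$ then becomes a fixed linear integral operator $\mathcal{L}$ acting on $6$-tuples of densities on the simplex, whose leading eigenvalue is pinned by the growth rate of $\#\calP_{+^n}$. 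Since $\#\calP_{+^n}$ is the Euler number by Corollary~\ref{cor:Eulernumbers}, whose exponential generating function $\sec x+\tan x$ has its dominant singularity at $x=\pi/2$, the per-step growth of $\#\calP_{+^n}/n!$ is $2/\pi$; I therefore expect $\mathcal{L}$ to have leading eigenvalue $2/\pi$ with a trigonometric Perron eigenfunction, which is precisely the expected source of the constants $1/\pi$ and $2/\pi$ in the conjecture.

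Granting the existence and uniqueness (up to scale) of this Perron eigenfunction $(\rho^{(1)},\ldots,\rho^{(6)})$, the limiting densities are then $p_\infty^{(\alpha)}=\int\rho^{(\alpha)}\big/\sum_\beta\int\rho^{(\beta)}$, the integrals being over the simplex. Conceptually this is the statement that, for a uniformly random $Z\in\calP_{+^n}$ --- equivalently, via the bijection of Theorem~\ref{thm:cyclicshape}, a uniformly random up/down permutation --- the four special elements $1,2,n-1,n$ occupy limiting angular positions on the circle whose joint law is governed by $\rho$, and $p_\infty^{(\alpha)}$ is the probability of the corresponding one of the six cyclic arrangements. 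The symmetry of the answers under the exchanges $(2)\leftrightarrow(5)$ and $(3)\leftrightarrow(4)$, and the fact that they sum to $1$, should both follow from a reflection symmetry of $\mathcal{L}$ together with the partition $\calP_w=\bigsqcup_\alpha\calR_w^{(\alpha)}$; these give useful consistency checks and reduce the number of independent integrals one must evaluate.

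The two hard parts are analytic. First, one must justify the passage to the continuum and the convergence of $\vec{R}_n/\|\vec{R}_n\|$ to the Perron eigenfunction: this is a spectral-gap statement for $\mathcal{L}$, for which I would seek a Perron--Frobenius / Krein--Rutman argument after verifying that $\mathcal{L}$ is positive and compact on a suitable Banach space of densities, the main subtlety being the degenerate, measure-valued boundary behaviour on the faces of the simplex coming from the $j=0$ and $k=0$ faces already visible in Figure~\ref{fig:triangleevolution}. Second, one must solve the eigenvalue problem $\mathcal{L}\rho=\tfrac{2}{\pi}\rho$ in closed form; I expect the six components to be sums and products of sines and cosines of the barycentric coordinates, obtainable from an ODE/PDE got by differentiating the integral relations, after which each $\int\rho^{(\alpha)}$ is an elementary trigonometric integral. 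Verifying that these integrals reproduce exactly $1/\pi$, $\tfrac12-\tfrac1\pi$, $\tfrac2\pi-\tfrac12$ and $1-\tfrac3\pi$ is the final and, I expect, most delicate computational step.
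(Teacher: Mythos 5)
This statement is a conjecture: the paper offers no proof of it, only numerical evidence (computation of $p_n^{(\alpha)}$ up to $n=50$ agreeing with the stated constants to $10^{-8}$). So there is no proof in the paper to compare yours against; anything you produce here would be new mathematics, and your write-up should be judged as such.

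As it stands, your proposal is a research programme rather than a proof, and you say so yourself. The framing is sound and the heuristics are consistent: for $w=+^n$ only the ``$+$'' half of Theorem~\ref{thm:fullgf} is used, evaluation at $X_1=\cdots=X_4=1$ does recover the cardinalities, the per-step growth rate $2/\pi$ of $E_n/n!$ is correct, and the six conjectured values do sum to $1$. But every step that would actually establish the result is deferred. Concretely: (i) you never write down the limiting operator $\mathcal{L}$, and the degeneration of $\Phi_{a,b,c}$ is more delicate than ``the shift $i_b\mapsto i_b+1$ becomes the identity'' suggests --- the operator collapses the coordinates $b$ and $c$ onto the face $i'_b+i'_c = $ const and redistributes mass along a line, so the natural limit object lives on measures with singular parts on faces of the simplex, exactly the boundary issue you flag but do not resolve; (ii) positivity and compactness of $\mathcal{L}$ on an appropriate cone, and hence the Krein--Rutman argument and the spectral gap needed for convergence of $\vec{R}_n/\|\vec{R}_n\|$, are asserted as goals, not verified; (iii) the eigenvalue problem $\mathcal{L}\rho=\tfrac{2}{\pi}\rho$ is not solved, and without the explicit $\rho^{(\alpha)}$ none of the four constants is actually derived; (iv) the claimed reflection symmetry giving $p^{(2)}_\infty=p^{(5)}_\infty$ and $p^{(3)}_\infty=p^{(4)}_\infty$ is not exhibited (note it need not be an exact finite-$n$ symmetry of the arrays, so it too requires an argument). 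In short, the proposal identifies a plausible route and the right source of the $\pi$'s, but the entire analytic content --- the continuum limit, the Perron--Frobenius theory, and the closed-form solution of the eigenproblem --- is missing, so the conjecture remains unproved.
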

The conjecture is supported by numerical simulations, whereby we computed each $p_n^{(\alpha)}$ for $n \leq 50$ and $1 \leq \alpha \leq 6$ and we observed the convergence to the predicted values, with a precision of $10^{-8}$ when $n=50$. If true, Conjecture~\ref{conj:asymptoticdensities} would imply that as $n$ goes to infinity, the asymptotic density of $\calQ_{+^n}^+$ (resp. $\calR_{+^n}^{+,+}$) inside $\calP_{+^n}$ equals $\frac{2}{\pi}$ (resp. $\frac{1}{2}$).

\paragraph*{Acknowledgements}

We thank Arvind Ayyer, Dan Betea, Wenjie Fang, Mat-thieu Josuat-Verg\`es and Bastien Mallein for fruitful discussions. We also acknowledge the support and hospitality of the Institut Henri Poincar\'e, where this work was initiated during the program on ``Combinatorics and interactions'', as well as the support of the Fondation Simone et Cino Del Duca. Finally we thank the anonymous referee for providing advice to improve the exposition.

\label{Bibliography}
\bibliographystyle{plain}
\bibliography{bibliographie}

\Addresses
\end{document}